\numberwithin{equation}{section}
\theoremstyle{plain}
\newtheorem{theorem}{Theorem}[section]
\newtheorem{lemma}[theorem]{Lemma}
\newtheorem{corollary}[theorem]{Corollary}
\theoremstyle{remark}
\DeclareMathOperator{\proj}{proj}
\DeclareMathOperator{\n}{\text{\bf{\emph n}}}
\newcommand{\av}{-\kern-10.7pt\int} 
\newcommand{\R}{\mathbb{R}}
\newcommand{\dd}{\mathrm{d}}
\begin{document}

\title[The LSW equation for reaction-controlled kinetics]{The Lifshitz--Slyozov--Wagner equation\\ for reaction-controlled kinetics}
\author{Apostolos Damialis}
\address{Institut f\"ur Mathematik\\ Humboldt-Universit\"at zu Berlin\\ Unter den Linden 6\\ 10099 Berlin\\ Germany} 
\email{\href{mailto:damialis@mathematik.hu-berlin.de}{\texttt{damialis@mathematik.hu-berlin.de}}\bigskip} 
\curraddr{Department of Mathematics\\ University of Athens\\ Panepistemiopolis\\ 15784 Athens\\ Greece}
\thanks{This work was supported by the DFG through the Gra\-du\-ier\-ten\-kol\-leg RTG-1128 ``Analysis, Numerics, and Optimization of Multiphase Problems'' at the Humboldt-Universit\"at zu Berlin.}

\begin{abstract}
We rigorously derive a weak form of the Lifshitz--Slyozov--Wagner equation as the homogenization limit of a Stefan-type problem describing reaction-controlled coarsening of a large number of small spherical particles. Moreover, we deduce that the effective mean-field description holds true in the particular limit of vanishing surface-area density of particles.
\end{abstract}

\maketitle

\section{Introduction}
The late-stage behavior of a material undergoing a first-order phase transition (due to changes in temperature and/or pressure for example) is characterized by thermodynamic instability resolved through phase separation and consequent coarsening of the emerging phase. In the case of the new phase occupying much smaller volume fraction, and thus ap\-pearing as well-separated particles, this coarsening process (known as Ostwald ripening) is driven by the minimization of surface energy at the interface via diffusional mass exchange between particles while the total mass or volume of each phase is conserved. The result of this kind of mass diffusion from regions of high to regions of low interfacial curvature is the growth of large particles and the shrinkage and final extinction of smaller ones. For a review of some aspects of Ostwald ripening, mainly from the physical and modeling viewpoint, see the survey by Voorhees \cite{voorhees} or the book by Ratke and Voorhees \cite{voorhees_book}. 

In this coarsening scenario the mass-diffusion process can be controlled by two different mechanisms: either by the diffusion of atoms away from the particles and into the bulk, 
or by the reaction-rate of attachment of atoms at the phase interface. In the former case (diffusion control), the random exchange of atoms between the particles and the bulk is sufficiently rapid and the surrounding of each particle is in thermal equilibrium with the atoms in it; in the latter (interface-reaction control), detachment and attachment are slow compared to diffusion and the surrounding bulk can be out of equilibrium with the particle interface. We refer to the physics literature for more details, for example, Slezov and Sagalovich \cite{slezov}, Bartelt, Theis, and Tromp \cite{bartelt}; for a related mathematical treatment see Dai and Pego \cite{daipego}.

The classical theory for Ostwald ripening was developed by Lifshitz and Slyozov \cite{ls} and Wagner \cite{wagner} in the case of supersaturated solid solutions in three dimensions. The Lifshitz--Slyozov--Wagner theory statistically characterizes the evolution by the particle-radius density $n(t,R)$, where $n(t,R)\, \dd R$ is defined to be the number of particles with radii between $R$ and $\dd R$ at time $t$ per unit volume. In the late stages of the phase tran\-si\-tion nucleation and coalesence of particles can be neglected since new nuclei dissolve immediately and since particles cannot merge because of the large distances between them. Thus, the particle-radius density satisfies the continuity equation (see \cite[\S 5.1]{voorhees_book})
\[
\frac{\partial}{\partial t} n(t,R) + \frac{\partial}{\partial R} \big( \text{\sl v}\kern.5pt (t,R) n(t,R) \big) = 0,
\]
where $\text{\sl v}\kern.5pt (t,R)$ denotes the growth rate of particles of radius $R$ at time $t$.
Using a mean-field ansatz (cf.\ Section~\ref{growth}), Lifshitz, Slyozov, and Wagner 
formally calculate that 
\[
\frac{\partial}{\partial t} n(t,R) + \frac{\partial}{\partial R} \left( \frac{1}{R^2}(R\bar{u} -1) n(t,R) \right) = 0,
\]
with
\[
\bar{u}(t) = \int_{0}^{\infty} n(t,R)\, \dd R \,\bigg/ \int_{0}^{\infty} Rn(t,R)\, \dd R,
\]
in the diffusion-controlled case, and
\[
\frac{\partial}{\partial t} n(t,R) + \frac{\partial}{\partial R} \left( \left(\bar{u} - \frac{1}{R}\right) n(t,R) \right) = 0,
\]
with
\[
\bar{u}(t) = \int_{0}^{\infty} Rn(t,R)\, \dd R \,\bigg/ \int_{0}^{\infty} R^2n(t,R)\, \dd R,
\]
in the reaction-controlled one, both results valid in the limit of vanishing mass or volume fraction of particles.

In \cite{niethammer1} and \cite{niethammer} Niethammer rigorously derived the effective equations in the dif\-fu\-sion-controlled case, starting from a quasi-static one-phase Stefan problem with surface tension and kinetic undercooling, 
\begin{equation}\label{stefan}
\left.
\begin{array}{l}
-\Delta u = 0\quad \text{in } \Omega \setminus G,\medskip\\
V = \nabla u \cdot \n\quad \text{on } \partial G, \medskip\\
u = H + \beta V\quad \text{on } \partial G,
\end{array}\right\}
\end{equation}
and restricting it to spherical particles. The same was also done in \cite{niethammer1} for the full time-dependent parabolic problem but without the kinetic-drag term $\beta V$. Here, $u$ is a chemical potential, $\n$ is the outer normal to the particle phase $G$, $V$ is 
the normal velocity of the phase interface $\partial G$, and $H$ is its mean curvature. The domain $\Omega \subset \R^3$ is considered bounded and $\beta$ is a parameter that comes from the nondimensionalization and scales like diffusivity over mobility. The second boundary condition is the Gibbs--Thomson law, coupling the curvature of the interface with the chemical potential, modified by accounting for kinetic drag. Note that while under diffusion control the parameter $\beta$ is small and the kinetic drag can even be neglected (thus yielding the well-known Mullins--Sekerka model \cite{ms}), in the reaction-controlled case the values of $\beta$ are large and, therefore, the kinetic-drag term is necessary. For a derivation of such sharp-interface free-boundary problems from continuum mechanics and thermodynamics see the book of Gurtin \cite{gurtin}. 

The goal in the following is to use the techniques developed in \cite{niethammer1} and  \cite{niethammer} to derive the effective equations in the reaction-controlled case. This involves passing over to a different time scale incorporating the parameter $\beta$ tending to infinity (see Section~\ref{formulation}) and, as a result, some extra manipulations in the proofs. Except for the scaling, in Section~\ref{formulation} we also give short proofs of some useful preliminaries and discuss the validity of the mean-field description while in Section~\ref{growth} we prove pointwise estimates for approximate solutions and for the growth rates of particles. Finally, using these estimates, in Section~\ref{homogenization} we pass to the homogenization limit of infinitely-many particles and obtain a weak form of the Lifshitz--Slyozov--Wagner equation.

In comparison with the results in the diffusion-controlled case, we make precise that the crucial quantity that has to vanish in order to neglect direct interactions between particles and justify the expected mean-field law is the surface-area density of the particles in contrast to their capacity in the other case (see \cite{niethammer1} and \cite{niethammer}). This difference is of interest since the asymptotic limits of vanishing surface area and capacity have different physical interpretations and further refine the na\"{\i}ve general limit of vanishing mass or volume. For the reaction-controlled case though, the result is in some sense to be expected since the limit of vanishing surface-area density corresponds to the physics of the interface-reaction-controlled scenario, where there is an obvious dependence on the area of the interface.

\section{Formulation, scaling, and preliminary estimates}\label{formulation}
We start with problem \eqref{stefan} where the quasi-static approximation to the parabolic diffusion equation is justified by the small interfacial velocities present during late-stage coarsening. (See the discussion in Mullins and Sekerka \cite{ms}.)

We further suppose that the solid phase consists of spherical particles with centers fixed in space, a simplification that can be justified by the work of Alikakos and Fusco \cite{alikakos1}, \cite{alikakos2}, and Vel\'azquez \cite{velazquez}. Denoting these particles as $B_i$, where each $B_i$ is the closed ball $\overline{B(x_i, R_i(t))},$ the particle phase is then the union $\cup B_i$ and its isotropic evolution can be modeled by averaging the flux in the Stefan condition, i.e., 
\[
V = \dot{R}_i(t) := \av_{\partial B_i} \nabla u \cdot \n,
\]
where the average integral is defined as
\[
\av_D f := \frac{1}{|D|} \int_D f,
\]
for a function $f$ on some domain $D$, and where the overdot denotes a derivative with respect to time; the Gibbs--Thom\-son law becomes then
\[
u = \frac{1}{R_i} + \beta \dot{R}_i,
\]
since in the case of spheres the mean curvature is the inverse radius.

To have many small particles in a bounded domain, for a system with size of
order $\mathrm{O}(1)$, say the unit cube $[0,1]^3$,  let $\delta$ be the typical
particle distance with $0 < \delta \ll 1$. For the distribution of particle
centers in space, we assume, for simplicity, that they are situated on a
three-dimensional lattice of spacing $\delta$. Then, the initial number density
of particles $N_{\mathrm{i}}(\delta)$ will be bounded by $1 / \delta^3$, and
for the particles to be small let the typical particle size be $\delta^\alpha$
for $\alpha > 1$. For times $t \in [0,T]$ we choose a $\delta$ small enough so
that adjacent particles of size $\delta^\alpha$ will not collide during the
evolution up to a maximal time $T$.

Concerning the assumption on the spatial distribution of particles, a more general assumption like $\inf_{i \neq j} |x_i - x_j| > c \delta$, for a constant $c>0$, would still be enough for our purposes in this work. These considerations will also be used in the proof of Lemma \ref{u_bound} where we approximate a certain sum over all particles by an integral. For an approach using more sophisticated deterministic and stochastic assumptions on the distribution of particles with respect to homogenization we refer to Niethammer and Vel\'azquez \cite{nietvelaz1}, \cite{nietvelaz2}, where also further refinements of the theory are made.

To have particle sizes of order $\mathrm{O}(1)$ as well, we rescale 
\[
R_{i}^{\delta} := \frac{R_i}{\delta^\alpha},
\]
and motivated by the scaling invariance of problem \eqref{stefan} (cf.\ \cite{daipego}),
\[
u^\delta := \delta^\alpha u,\quad t^\delta := \frac{t}{\delta^{2\alpha}}.
\]
Notice that this rescaling is another way of addressing the reaction-controlled regime. Instead of rescaling time by $\beta$ and then letting $\beta$ tend to infinity, we keep $\beta$ fixed and positive, and specially rescale as above letting $\delta$ tend to zero. Since now $\beta$ plays no significant role, we will set it to unity in what follows. In addition, one easily sees that the transformations $R_{i}^{\delta}$, $u^\delta$, and $t^\delta$ preserve the form of the equations. From hereon we also drop the superscript $\delta$ from the notation for time and to denote the dependence on the new scale we write 
\[
B_{i}^{\delta} := \overline{B( x_i,\delta^\alpha R_{i}^{\delta})}.
\]
Finally, note that under diffusion control the relevant scale for time would be $\delta^{3\alpha}$ instead of $\delta^{2\alpha}$. This difference is key to all that follows, leading to different considerations on the validity of the mean-field model. (Cf.\ the remarks following Lemma \ref{a_priori}.)

As initial data, for every particle-center $x_i$ we associate a corresponding bounded initial radius $R_{i}^{\delta}(0)$ with the assumption that 
\[
{\textstyle \sup_{i \in N_{\mathrm{i}}}} R_{i}^{\delta}(0) \leq R_0,
\]
uniformly for some constant $R_0$. To consider a closed system, we impose a no-flux Neumann boundary condition on the outer boundary of $\Omega$, i.e., 
\[
\nabla u^\delta \cdot \n = 0\quad \text{on } \partial \Omega.
\]

In case the $i$th particle vanishes at time $t_i := \sup \{ t \mid R_{i}^{\delta}(t) > 0 \},$ for times later than $t_{i}$ we define $R_{i}^{\delta}$ to be zero, reduce the number $N(t) := \{ j \mid R_{j}^{\delta} (t) >0 \}$ of active particles by one, and neglect the boundary $\partial B_{i}^{\delta}$ in the boundary conditions. In the following, all sums, unions, and suprema will run over the set $N(t)$, with $N(0) \equiv N_{\mathrm{i}}$, and any further reference to the particle-number density will mean the active particle-number density $N$ unless otherwise noted.

Summarizing, the restricted and rescaled problem for the particle radii can be considered as a nonlocal, $N$-dimensional system of ordinary differential equations
\begin{equation}
\dot{R}_{i}^{\delta}(t) = \frac{1}{4 \pi \delta^{2\alpha} R_{i}^{\delta}(t)^2} \int_{\partial B_{i}^{\delta}(t)} \nabla u^\delta \cdot \n\quad \text{on } \partial B_{i}^{\delta}(t), \label{p1}
\end{equation}
for times $t \in (0,t_{i})$, $t_i<T$, and with bounded initial data $R_{i}^{\delta}(0)$ for every $i$, while the chemical potential is determined by
\begin{equation}
-\Delta u^\delta(t,x) = 0\quad  \text{in } \Omega \setminus \cup B_{i}^{\delta} (t), \label{p2}
\end{equation}
\begin{equation}
u^\delta(t,x) = \frac{1}{R_{i}^{\delta}(t)} + \dot{R}_{i}^{\delta}(t)\quad \text{on } \partial B_{i}^{\delta}(t), \label{p3}
\end{equation}
and the Neumann condition on the outer boundary.

Global existence and uniqueness of continuous, piecewise-smooth solutions for a similar restricted Stefan problem was proved in \cite{niethammer} by an application of the Picard--Lindel\"of theorem, the only difference being the different time scale. These solutions are not globally smooth due to the singularities arising from the extinction of particles; however, they are smooth in the intervals between the extinction times $t_i$. In the following, when we mention solutions of the problem we will mean such continuous, piecewise-smooth solutions that exist up to any given time $T$.

It is easy to see that equations \eqref{p1}, \eqref{p2}, \eqref{p3}, along with the outer boundary condition conserve the volume and decrease the interfacial area of the particle phase. Indeed, differentiating the total volume of particles with respect to time gives 
\[ \frac{\dd}{\dd t} \sum_i R_{i}^{\delta}(t)^3 = 3\sum_i R_{i}^{\delta}(t)^2 \dot{R}_{i}^{\delta} (t) = 3 \sum_i R_{i}^{\delta}(t)^2 \frac{1}{4 \pi \delta^{2\alpha} R_{i}^{\delta} (t)^2} \int_{\partial B_{i}^{\delta}} \nabla u^\delta \cdot \n \]
where the last sum vanishes due to the divergence theorem, equation \eqref{p2}, and the no-flux condition on $\partial \Omega$. The decrease of total surface area follows from the next {\em a priori} estimate.
\begin{lemma}\label{a_priori}
For any time $t\in(0,T)$, the solutions of the problem satisfy the following energy equality.
\[
\sum_i \int_{0}^{t} (R_{i}^{\delta})^2 |\dot{R}_{i}^{\delta}|^2 + \frac{1}{2} \sum_i R_{i}^{\delta}(t)^2 + \frac{1}{4 \pi \delta^{2\alpha}} \int_{0}^{t} \kern-4pt \int_{\Omega \setminus \cup B_{i}^{\delta}} |\nabla u^\delta|^2 = \frac{1}{2} \sum_i R_{i}^{\delta}(0)^2.
\]
\end{lemma}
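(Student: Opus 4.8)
The plan is to prove the differential version of the identity on each time interval on which the solution is smooth, and then to assemble the pieces across the extinction times. On such an interval, differentiating the surface term gives $\frac{\dd}{\dd t}\frac12\sum_i R_i^\delta(t)^2=\sum_i R_i^\delta\dot R_i^\delta$, so it is enough to show that $\sum_i R_i^\delta\dot R_i^\delta=-\sum_i (R_i^\delta)^2|\dot R_i^\delta|^2-\frac{1}{4\pi\delta^{2\alpha}}\int_{\Omega\setminus\cup B_i^\delta}|\nabla u^\delta|^2$; integrating this from $0$ to $t$ and rearranging produces the asserted equality.

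To obtain the differential identity I would first insert the growth law \eqref{p1}, which turns $\sum_i R_i^\delta\dot R_i^\delta$ into $\frac{1}{4\pi\delta^{2\alpha}}\sum_i\frac{1}{R_i^\delta}\int_{\partial B_i^\delta}\nabla u^\delta\cdot\n$. Since $1/R_i^\delta$ and $\dot R_i^\delta$ are spatially constant on $\partial B_i^\delta$, the Gibbs--Thomson relation \eqref{p3} allows me to replace $1/R_i^\delta$ by $u^\delta-\dot R_i^\delta$ under the integral and split it as $\int_{\partial B_i^\delta}u^\delta\,\nabla u^\delta\cdot\n-\dot R_i^\delta\int_{\partial B_i^\delta}\nabla u^\delta\cdot\n$; the second summand, by \eqref{p1} once more, equals $4\pi\delta^{2\alpha}(R_i^\delta)^2|\dot R_i^\delta|^2$, which accounts for the kinetic-dissipation term. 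For the remaining term $\sum_i\int_{\partial B_i^\delta}u^\delta\,\nabla u^\delta\cdot\n$ I would apply Green's first identity on the exterior domain $\Omega\setminus\cup B_i^\delta$: harmonicity of $u^\delta$ from \eqref{p2} kills the volume term, the outer Neumann condition $\nabla u^\delta\cdot\n=0$ kills the contribution from $\partial\Omega$, and on each $\partial B_i^\delta$ the outward normal of $\Omega\setminus\cup B_i^\delta$ is $-\n$, so the sum equals $-\int_{\Omega\setminus\cup B_i^\delta}|\nabla u^\delta|^2$. Combining the two contributions yields the differential identity.

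The step that needs the most care is the passage across the extinction times $t_i$, where the solution is only piecewise smooth and the free boundary $\cup\partial B_i^\delta$ loses a component. The point is that every term in the identity remains continuous at $t_i$: $\sum_i R_i^\delta(t)^2$ is continuous because $R_i^\delta\to0$ there, so that $B_i^\delta$ shrinks to a point and neither the Dirichlet energy nor the time integrals see a jump; hence the differential identity, valid on each open interval between consecutive extinction times, integrates and glues to all of $(0,T)$. Finally, applying the already-established identity on $(0,t_i-\varepsilon)$ yields the $\varepsilon$-uniform bound $\sum_j\int_0^{t_i-\varepsilon}(R_j^\delta)^2|\dot R_j^\delta|^2\le\frac12\sum_j R_j^\delta(0)^2$, so by monotone convergence the dissipation integral is finite up to $t_i$ and nothing is lost in the limit, which is also what makes all three terms on the left-hand side well defined.
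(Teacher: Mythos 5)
Your proof is correct and follows essentially the same route as the paper's: both rely on multiplying the harmonicity relation by $u^\delta$, integrating by parts using the Neumann condition on $\partial\Omega$, and substituting the boundary conditions \eqref{p1} and \eqref{p3} on each $\partial B_i^\delta$; you simply start from the term $\sum_i R_i^\delta\dot R_i^\delta$ and work backward to the same identity, whereas the paper starts from testing $-\Delta u^\delta=0$ against $u^\delta$. The extra paragraph on continuity across extinction times is additional care that the paper handles implicitly through the piecewise-smooth solution concept, but it does not change the argument.
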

\begin{proof}
Multiplying $-\Delta u^\delta = 0$ with $u^\delta$, integrating over 
$\Omega \setminus \cup B_{i}^{\delta}$, and integrating by parts gives 
\[
\int_{\Omega \setminus \cup B_{i}^{\delta}} |\nabla u^\delta|^2 + \sum_i \int_{\partial B_{i}^{\delta}} (\nabla u^\delta \cdot \n) u^\delta - \int_{\partial \Omega} (\nabla u^\delta \cdot \n)u^\delta = 0,
\]
where the last term vanishes due to the Neumann condition on the outer boundary. 
Thus, using equations \eqref{p3} and \eqref{p1} we get 
\begin{align*}
-\int_{\Omega \setminus \cup B_{i}^{\delta}} |\nabla u^\delta|^2 
& = \sum_i \left( \frac{1}{R_{i}^{\delta}} + \dot{R}_{i}^{\delta} \right) \int_{\partial B_{i}^{\delta}} \nabla u^\delta \cdot \n\\
& = \sum_i \left( \frac{1}{R_{i}^{\delta}} + \dot{R}_{i}^{\delta} \right) 4 \pi \delta^{2\alpha} (R_{i}^{\delta})^2 \dot{R}_{i}^{\delta} \\
& = 4 \pi \delta^{2 \alpha} \sum_i \big( R_{i}^{\delta} \dot{R}_{i}^{\delta} + (R_{i}^{\delta})^2 |\dot{R}_{i}^{\delta}|^2\big), 
\end{align*}
and after rearranging,
\begin{equation}\label{rr}
\sum_i (R_{i}^{\delta})^2 |\dot{R}_{i}^{\delta}|^2 + \sum_i R_{i}^{\delta} \dot{R}_{i}^{\delta} + \frac{1}{4 \pi \delta^{2\alpha}} \int_{\Omega \setminus \cup B_{i}^{\delta}} |\nabla u^\delta |^2 = 0.
\end{equation}
The result follows from an integration over time.
\end{proof}

After normalization with respect to the initial particle-number density $N_{\mathrm{i}}$, this energy equality can yield useful information on the validity of the mean-field approach. In fact, we have
\[
\frac{1}{N_{\mathrm{i}}} \sum_i \int_{0}^{t} (R_{i}^{\delta})^2 |\dot{R}_{i}^{\delta}|^2 + \frac{1}{2N_{\mathrm{i}}} \sum_i R_{i}^{\delta}(t)^2 + \frac{1}{4 \pi N_{\mathrm{i}} \delta^{2\alpha}} \int_{0}^{t} \kern-4pt \int_{\Omega \setminus \cup B_{i}^{\delta}} |\nabla u^\delta|^2 = 
\frac{1}{2N_{\mathrm{i}}} \sum_i R_{i}^{\delta}(0)^2,
\]
where the right-hand side is uniformly bounded by the assumption on the initial
radii. For the left-hand side to stay bounded as well, if the quantity
$N_{\mathrm{i}} \delta^{2\alpha}$ tends to zero, the same must hold for $|\nabla
u^\delta|$ and it is exactly this limit of vanishing surface-area density of
particles that results in a mean field that is constant in space since, in
particular, 
\[
\nabla u^\delta \to 0 \text{ in } L^2 \big(0,T;H^1(\Omega)\big).
\]
Here and in the following, to obtain global estimates that are uniform in
$\delta$ we extend $u^\delta$ to the interior of particles, and thus to the
whole of $\Omega$, by its boundary values. It is important to note that in our
scaling setup, for the surface area to vanish as $\delta$ tends to zero, the
exponent $\alpha$ must be strictly larger than $3/2$ since $N_{\mathrm{i}}$
is $\mathrm{O}(1 / \delta^3)$. These facts will be made precise in
Corollary \ref{uu_bound} where we give an estimate of the mean-field effect.
Note also that we do not address here the critical case $\alpha= 3 / 2$
that corresponds to finite surface area. For that one would have to use the
different methods developed by Niethammer and Otto in \cite{niet-otto}.

Finally, note that for similar considerations under diffusion control, the corresponding quantity would be the capacity $N_{\mathrm{i}} \delta^{\alpha}$ due to the different time scale. In three dimensions, this capacity effect fits to general homogenization results as in the work of Cioranescu and Murat \cite{murat}; to our knowledge though, the surface-area effect has not been explicitly discussed in the relevant literature.

\section{Approximation and growth-rate estimates}\label{growth} 
As in the mean-field ansatz of Lifshitz, Slyozov, and Wagner, we suppose that the system is dilute enough so that particles behave as if they were isolated and we base our approximation on the solution of a single-particle problem. 

Consider problem \eqref{p1}, \eqref{p2}, \eqref{p3} for a single spherical particle centered at the origin and with initially unscaled radius $r$ that we rescale as $r^\delta := r / \delta^\alpha$, along with the corresponding reaction-controlled rescalings for a chemical potential $u_{r}^{\delta}$ and time, as in Section \ref{formulation}. For this rescaled particle $B_{r}^{\delta}$ we consider the following problem in the whole space:
\[
\dot{r}^\delta(t) = \frac{1}{4 \pi \delta^{2\alpha} r^\delta(t)^2} \int_{\partial B_{r}^{\delta}} \nabla u_{r}^{\delta} \cdot \n\quad \text{on } \partial B_{r}^{\delta},
\]
where the chemical potential $u_{r}^{\delta}(t,x)$ satisfies
\[
- \Delta u_{r}^{\delta}(t,x) = 0\quad \text{in } \R^3 \setminus B_{r}^{\delta},
\]
\[
u_{r}^{\delta}(t,x) = \frac{1}{r^\delta(t)} + \dot{r}^\delta(t)\quad \text{for } x \in \partial B_{r}^{\delta},
\]
and the mean-field assumption is posed as a condition at infinity, i.e.,  
\[
\lim_{|x| \to \infty} u_{r}^{\delta} (t,x) = \bar{u}_{r}^{\delta}(t).
\]
This problem can be explicitly solved to give
\[
u_{r}^{\delta}(t,x) = \bar{u}_{r}^{\delta}(t) + \frac{\delta^\alpha r^\delta(t)}{1 + \delta^\alpha r^\delta(t)} \big(1 - \bar{u}_{r}^{\delta}(t)r^\delta(t)\big) \frac{\delta^\alpha}{|x|}
\]
and
\[
\dot{r}^\delta(t) = \frac{1}{1 + \delta^\alpha r^\delta(t)}
\left( \bar{u}_{r}^{\delta}(t) - \frac{1}{r^\delta(t)} \right).
\]
Note that in the formal limit of $\delta$ tending to zero, the expected effective equations take the general form
\[
u(t,x) = \bar{u}(t) ~~\text{ and }~~ \dot{r} = \bar{u} - \frac{1}{r},
\]
as in the reaction-controlled Lifshitz--Slyozov--Wagner theory.

Going now back to the many-particle problem, a calculation using the single-particle growth rate above along with the requirement that the volume is conserved gives the following expression for the mean field
\begin{equation}\label{meanfield}
\bar{u}^{\delta} =  \sum_{i} \frac{R_{i}^{\delta}}{1 + \delta^\alpha R_{i}^{\delta}} \bigg/ \sum_{i} \frac{(R_{i}^{\delta})^2}{1 + \delta^\alpha R_{i}^{\delta}}.
\end{equation}
The effect of this mean field plus a sum of single-particle solutions will be the monopole approximation to the solution $u^\delta$ supposing that there are no direct interactions between particles. To this end, let us define the approximate solution
\begin{equation}
\zeta^\delta (t,x) := \bar{u}^{\delta}(t) + \sum_{i} \frac{\delta^\alpha R_{i}^{\delta}(t)}{1 + \delta^\alpha R_{i}^{\delta}(t)} \big( 1 - \bar{u}^{\delta}(t) R_{i}^{\delta}(t) \big) \frac{\delta^\alpha}{|x-x_i|}
\end{equation}
for $x \in \Omega \setminus \cup B_{i}^{\delta} (t)$.

Below is a maximum principle tailored to our setting that will be used to compare the approximation and the solution in the lemma next. Its proof can be found in \cite{niethammer}.
\begin{lemma}\label{maximum_principle}
Let $\Omega$ be a Lipschitz domain and let $\cup B_i \subset \Omega$ be a finite collection of disjoint closed balls. Then, a function $v$ which is constant on each of the boundaries $\partial B_i$ and satisfies 
\[
-\Delta v = 0\quad \text{in } \Omega \setminus \cup B_i,
\]
\[
v - c_i \int_{\partial B_i} \nabla v \cdot \n \geq 0\quad \text{on } \partial B_i,
\]
\[
\nabla v \cdot \n \geq 0\quad \text{on } \partial \Omega,
\]
where $c_i \geq 0$ for all $i$, also satisfies 
\[
v \geq 0\quad \text{in } \Omega \setminus \cup B_i.
\]
\end{lemma}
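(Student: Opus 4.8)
The plan is to run an energy argument in the spirit of Lemma~\ref{a_priori}, testing the equation against the negative part of $v$. I set $w := \min(v,0)$, so that $w\le 0$, $\nabla w = \nabla v$ on $\{v<0\}$ and $\nabla w = 0$ elsewhere, and — crucially — $w$ is still constant on each sphere $\partial B_i$, equal to $w_i := \min(v_i,0)$, where $v_i$ is the constant value of $v$ on $\partial B_i$. One first notes that $v$ is regular enough for this to make sense: $v$ is harmonic, hence smooth, in the open region $\Omega\setminus\cup B_i$, and has an $H^1$-trace on $\partial\Omega\cup\bigcup_i\partial B_i$; this is where the Lipschitz assumption on $\Omega$ is used.

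The core of the proof is the identity obtained by multiplying $-\Delta v=0$ by $w$, integrating over $\Omega\setminus\cup B_i$ and integrating by parts, keeping track of the fact that the outer normal of $\Omega\setminus\cup B_i$ along $\partial B_i$ is $-\n$:
\[
\int_{\Omega\setminus\cup B_i}|\nabla w|^2 = \int_{\partial\Omega}(\nabla v\cdot\n)\,w - \sum_i w_i\int_{\partial B_i}\nabla v\cdot\n .
\]
Now I would estimate the two boundary contributions. On $\partial\Omega$ the integrand $(\nabla v\cdot\n)\,w$ is a product of a nonnegative and a nonpositive function, hence $\le 0$. For each $i$: if $v_i\ge 0$ then $w_i=0$ and that summand vanishes; if $v_i<0$ then necessarily $c_i>0$ (otherwise the hypothesis on $\partial B_i$ would read $v_i\ge 0$), and dividing the inequality $v_i\ge c_i\int_{\partial B_i}\nabla v\cdot\n$ by $c_i$ gives $\int_{\partial B_i}\nabla v\cdot\n\le v_i/c_i<0$, so that $-w_i\int_{\partial B_i}\nabla v\cdot\n = -v_i\int_{\partial B_i}\nabla v\cdot\n\le 0$. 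Hence the right-hand side of the identity is $\le 0$ while the left-hand side is $\ge 0$; therefore every term vanishes, and in particular $\nabla w\equiv 0$ in $\Omega\setminus\cup B_i$.

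It remains to upgrade ``$w$ locally constant'' to ``$w\equiv 0$''. Since $\Omega\setminus\cup B_i$ is connected and $\cup B_i$ is nonempty in our setting (the unit cube minus a sublattice of small balls), $w$ equals a single constant $w_0\le 0$. If $w_0<0$, then $v\equiv w_0<0$ throughout the region, so $\nabla v\equiv 0$ and in particular $\int_{\partial B_i}\nabla v\cdot\n=0$ for every $i$; the hypothesis on $\partial B_i$ then forces $v_i\ge 0$, contradicting $v_i=w_0<0$. Hence $w_0=0$, i.e.\ $v\ge 0$ in $\Omega\setminus\cup B_i$, as claimed. (If one did not want to assume connectedness, the same dichotomy would be run component by component, using that each component meets at least one $\partial B_i$.)

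I expect the main obstacle to be twofold: the careful bookkeeping of signs and of the orientations of $\n$ in the integration-by-parts identity — this is exactly what makes the Robin-type condition with $c_i\ge 0$ combine favourably with the sign of $w$ — and the justification of that identity at the level of regularity available on a merely Lipschitz outer domain (existence of traces, validity of Green's formula, continuity of $v$ up to $\partial\Omega$). A more classical route would invoke the strong maximum principle together with Hopf's lemma at a negative boundary minimum; Hopf applies cleanly at the smooth spheres $\partial B_i$ (yielding a definite sign for $\int_{\partial B_i}\nabla v\cdot\n$ there), but it is delicate at the Lipschitz boundary $\partial\Omega$, which is a further reason to prefer the energy argument sketched above.
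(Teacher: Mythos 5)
Your argument is correct, and it appears to take a genuinely different route from the one the paper defers to (the proof is not reproduced here but cited from Niethammer's kinetic-undercooling paper), where the lemma is established by the classical route: a would-be negative minimum of the harmonic function $v$ is pushed to the boundary, a Hopf-type boundary-point lemma at the smooth spheres $\partial B_i$ gives a strict sign for $\int_{\partial B_i}\nabla v\cdot\n$ there which contradicts the Robin-type condition, and the outer-boundary case is handled by the Neumann sign condition. Your energy method, testing $-\Delta v=0$ against $w=\min(v,0)$, is a clean alternative that buys exactly what you say it does: it replaces a boundary-point lemma at a merely Lipschitz $\partial\Omega$ (where the interior-ball condition can fail) by the sign of the duality pairing $\langle\nabla v\cdot\n,\,w\rangle_{\partial\Omega}$, which causes no trouble for $H^1$ data. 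The sign bookkeeping on the spheres is right: if $v_i<0$ the Robin inequality forces $c_i>0$ and then $\int_{\partial B_i}\nabla v\cdot\n\le v_i/c_i<0$, so $-w_i\int_{\partial B_i}\nabla v\cdot\n\le 0$; together with the nonpositive $\partial\Omega$ contribution and the nonnegative Dirichlet energy, every term must vanish. In fact, once each term vanishes you can finish a touch more directly than you do: for any $i$ with $v_i<0$ the vanishing of $-w_i\int_{\partial B_i}\nabla v\cdot\n$ forces $\int_{\partial B_i}\nabla v\cdot\n=0$, hence $v_i\ge0$ by the Robin condition, a contradiction; so all $w_i=0$, and since $\nabla w\equiv0$ and $\Omega\setminus\cup B_i$ is connected (which holds automatically in $\R^3$ for a domain $\Omega$ minus finitely many disjoint closed balls), $w\equiv0$. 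Two things deserve to be stated rather than hinted at: (i) the function class of $v$ — say $v\in H^1(\Omega\setminus\cup B_i)$, harmonic in the interior — so that Green's identity on the Lipschitz domain, the traces of $w$, and the $H^{-1/2}$–$H^{1/2}$ interpretation of $\nabla v\cdot\n\ge0$ on $\partial\Omega$ are all well defined; and (ii) that the collection of balls must be nonempty, since otherwise (pure Neumann problem on $\Omega$) one can add any negative constant to $v$ and the conclusion fails. You note both in passing; they should be explicit hypotheses in a self-contained writeup.
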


\begin{lemma}\label{u_bound}
For any time $t\in(0,T)$ and small positive $\varepsilon$, the chemical potential and its approximation satisfy  
\[
\| u^\delta - \zeta^\delta \|_{L^\infty(\Omega \setminus \cup B_{i}^{\delta})}(t) \leq C \delta^{2\alpha -3 -\varepsilon} \sup R_{i}^{\delta}(t) \big( 1+ \bar{u}^\delta(t) \sup R_{i}^{\delta}(t) \big).
\]
\end{lemma}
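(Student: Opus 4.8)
The plan is to apply the maximum principle of Lemma~\ref{maximum_principle} to $w:=u^\delta-\zeta^\delta$. Note first that $w$ is harmonic in $\Omega\setminus\cup B_i^\delta$, since $u^\delta$ and the constant $\bar u^\delta$ are harmonic and each term $\delta^\alpha/|x-x_i|$ of $\zeta^\delta$ is harmonic away from $x_i\in B_i^\delta$. As in the single-particle computation above, on $\partial B_i^\delta$ the self-term of $\zeta^\delta$ contributes the constant $\tfrac1{R_i^\delta}+v_i$ with
\[
v_i:=\frac1{1+\delta^\alpha R_i^\delta}\Bigl(\bar u^\delta-\frac1{R_i^\delta}\Bigr)=\frac1{4\pi\delta^{2\alpha}(R_i^\delta)^2}\int_{\partial B_i^\delta}\nabla\zeta^\delta\cdot\n ;
\]
subtracting~\eqref{p3} and inserting~\eqref{p1} for $\dot R_i^\delta$ one obtains, on $\partial B_i^\delta$,
\[
w-\frac1{4\pi\delta^{2\alpha}(R_i^\delta)^2}\int_{\partial B_i^\delta}\nabla w\cdot\n=-g_i,\qquad g_i(x):=\sum_{j\ne i}\frac{\delta^\alpha R_j^\delta}{1+\delta^\alpha R_j^\delta}\bigl(1-\bar u^\delta R_j^\delta\bigr)\frac{\delta^\alpha}{|x-x_j|},
\]
i.e.\ $w$ satisfies a Robin-type condition with the non-negative weights $c_i=(4\pi\delta^{2\alpha}(R_i^\delta)^2)^{-1}$ and right-hand side the monopole field of the other particles, together with the Neumann defect $\nabla w\cdot\n=-\nabla\zeta^\delta\cdot\n$ on $\partial\Omega$.

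Next I would estimate the two defects. Because $\alpha>1$ the rescaled particles have radius $\delta^\alpha R_i^\delta\ll\delta$, so $|x-x_j|\ge\tfrac12|x_i-x_j|$ for $x\in\partial B_i^\delta$, while the centres, lying on an interior lattice of spacing $\delta$, stay at distance $\gtrsim\delta$ from $\partial\Omega$. Estimating each coefficient of $g_i$ (and of $\nabla\zeta^\delta$) by $C\delta^{2\alpha}\sup_k R_k^\delta(1+\bar u^\delta\sup_k R_k^\delta)$ and approximating the lattice sums by integrals, as announced in Section~\ref{formulation} — which gives $\sum_{j\ne i}|x_i-x_j|^{-1}+\sum_j|x-x_j|^{-2}\le C\delta^{-3-\varepsilon}$ — one bounds $\|g_i\|_{L^\infty(\partial B_i^\delta)}$ and $\|\nabla\zeta^\delta\cdot\n\|_{L^\infty(\partial\Omega)}$ by $M:=C\delta^{2\alpha-3-\varepsilon}\sup_k R_k^\delta(1+\bar u^\delta\sup_k R_k^\delta)$, and, using the extra factor $\operatorname{diam}\partial B_i^\delta\lesssim\delta^\alpha R_i^\delta$ in a first-order Taylor estimate, the finer bound $\operatorname{osc}_{\partial B_i^\delta}g_i\lesssim\delta^\alpha R_i^\delta M$.

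Finally I would correct the two ways in which $w$ fails the hypotheses of Lemma~\ref{maximum_principle} — being neither constant on $\partial B_i^\delta$ nor no-flux on $\partial\Omega$ — by an auxiliary harmonic $h$ on $\Omega\setminus\cup B_i^\delta$, built as a function harmonic on all of $\Omega$ carrying the Neumann data $-\nabla\zeta^\delta\cdot\n$ (hence with zero flux through every $\partial B_i^\delta$) plus a part that is no-flux on $\partial\Omega$ and has on each $\partial B_i^\delta$ mean-zero Dirichlet data equal to $g_i$ minus its average; then $\|h\|_{L^\infty}\lesssim M$, $w-h$ is constant on each $\partial B_i^\delta$, and $\nabla(w-h)\cdot\n=0$ on $\partial\Omega$. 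Applying Lemma~\ref{maximum_principle} to $CM\mp(w-h)$ — harmonic, constant on $\partial B_i^\delta$, no-flux on $\partial\Omega$, and with
\[
\bigl(CM\mp(w-h)\bigr)-c_i\int_{\partial B_i^\delta}\nabla\bigl(CM\mp(w-h)\bigr)\cdot\n=CM\mp\av_{\partial B_i^\delta}g_i\mp c_i\int_{\partial B_i^\delta}\nabla h\cdot\n\ge0
\]
for $C$ large — yields $\|w-h\|_{L^\infty}\lesssim M$, hence the claim. I expect this last step to be the main obstacle: choosing $C$ requires controlling the interaction terms $c_i\int_{\partial B_i^\delta}\nabla h\cdot\n$ uniformly in $\delta$, which is delicate because the weights $c_i=(4\pi\delta^{2\alpha}(R_i^\delta)^2)^{-1}$ degenerate as a particle approaches extinction; here one has to exploit the mean-value property of $h$ on $\partial B_i^\delta$ (which makes its monopole, hence its flux through $\partial B_i^\delta$, of the compensating order $\delta^\alpha R_i^\delta$) together with the cancellations already built into the self-consistent mean field $\bar u^\delta$ and the single-particle law, as in the corresponding estimate in~\cite{niethammer}.
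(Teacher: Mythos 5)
Your proposal follows the paper's blueprint closely: you use the same monopole approximation $\zeta^\delta$, perform the same cancellation in the Robin defect on $\partial B_i^\delta$ (reducing it to the sum over $j\ne i$), estimate that sum by a Riemann-integral argument, introduce an auxiliary harmonic function to fix the Neumann data on $\partial\Omega$, and finish with Lemma~\ref{maximum_principle} and the triangle inequality. Two comparative remarks and one genuine gap.

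\emph{Where you diverge from the paper.} The paper's auxiliary function $z^\delta$ corrects \emph{only} the outer Neumann defect. The crucial design feature is that $z^\delta$ is harmonic in all of $\Omega$ --- which is possible precisely because the mean-field identity \eqref{meanfield} forces $\int_{\partial\Omega}\nabla\zeta^\delta\cdot\n=0$ --- and therefore $\int_{\partial B_i^\delta}\nabla z^\delta\cdot\n=0$ automatically, so $z^\delta$ does not disturb the Robin defect at all. The paper then applies the maximum principle directly to $f_\pm=u^\delta-\zeta^\delta-z^\delta\pm CM$ without a second correction. You instead build $h$ in two pieces, the second of which carries mean-zero Dirichlet data $g_i-\av g_i$ on each $\partial B_i^\delta$ so that $w-h$ is genuinely constant there, in line with the literal hypothesis of Lemma~\ref{maximum_principle}. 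That instinct is reasonable (the paper applies the lemma to a function that is \emph{not} constant on $\partial B_i^\delta$, using only the pointwise Robin bound \eqref{est1}), but it is also what creates your obstacle.

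\emph{The gap.} Your second correction is harmonic only in $\Omega\setminus\cup B_i^\delta$, so $\int_{\partial B_i^\delta}\nabla h\cdot\n$ is not zero, and this flux re-enters the Robin condition multiplied by $c_i=(4\pi\delta^{2\alpha}(R_i^\delta)^2)^{-1}$, which blows up like $(R_i^\delta)^{-2}$ as a particle shrinks. The heuristic you invoke (mean-zero boundary data kills the monopole, hence the flux) is exact for the one-ball exterior problem in $\R^3$, but in the bounded domain with the other particles and the outer boundary present, the flux through $\partial B_i^\delta$ is only \emph{approximately} zero; you would need a quantitative estimate of that remainder that beats $(\delta^\alpha R_i^\delta)^2$ uniformly in $i$, and you do not supply one. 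You flag this yourself as ``the main obstacle,'' and it is: as written the proof does not close. Note that the paper's choice avoids this entirely --- $z^\delta$ has \emph{zero} particle flux by construction, and the extra cost of working with a non-constant comparison function is absorbed into the formulation of the Robin hypothesis.

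Two smaller points. First, you attribute the $\varepsilon$ in $\delta^{2\alpha-3-\varepsilon}$ to the lattice-sum approximation; in the paper the interior Riemann sum $\sum_{j\ne i}\delta^3/|x-x_j|$ is bounded \emph{without} loss (giving $\delta^{2\alpha-3}$ in \eqref{est1}), and the $\varepsilon$ enters only through the $L^\infty$ elliptic estimate for $z^\delta$ (the construction borrowed from Lemma~3 of \cite{niethammer1}). Second, for the first piece of your $h$ to be harmonic on all of $\Omega$ you implicitly need the compatibility condition $\int_{\partial\Omega}\nabla\zeta^\delta\cdot\n=0$; this is exactly the identity that fixes $\bar u^\delta$, and you should say so, since it is where the self-consistent mean field enters the argument.
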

\begin{proof}
Since the difference $u^\delta - \zeta^\delta$ is already harmonic in $\Omega \setminus \cup B_{i}^{\delta}$ as $\zeta^\delta$ is a superposition of fundamental solutions, we would like to estimate to what extent it satisfies the maximum principle's boundary conditions. 

For the condition on the particle boundaries, we use equations \eqref{p1}, \eqref{p3}, and the definition of $\zeta^\delta$ to calculate for $x$ on the boundary $\partial B_{i}^{\delta}$ of the $i$th particle,
\begin{equation*}
\begin{split}
\bigg| \bigg( \zeta&^\delta(t,x) - \av_{\partial B_{i}^{\delta}} \nabla \zeta^\delta \cdot \n \bigg) 
- \bigg( u^\delta(t,x) - \av_{\partial B_{i}^{\delta}} \nabla u^\delta \cdot \n \bigg) \bigg| \\ 
&= \bigg| \zeta^\delta - \frac{1}{R_{i}^{\delta}} - \av_{\partial B_{i}^{\delta}} \nabla \zeta^\delta \cdot \n \bigg| \\
&= \bigg| \bar{u}^{\delta} - \frac{1}{R_{i}^{\delta}} + \sum_{j}\bigg\{ \frac{\delta^\alpha R_{j}^{\delta}}{1 + \delta^\alpha R_{j}^{\delta}} (1 - \bar{u}^{\delta} R_{j}^{\delta}) \bigg( \frac{\delta^\alpha}{|x-x_j|} - \av_{\partial B_{i}^{\delta}} \nabla \left( \frac{\delta^\alpha}{|x-x_j|} \right) \cdot \n \bigg) \bigg\} \bigg|, 
\end{split}
\end{equation*}
and since by the divergence theorem there holds for $j \neq i$, 
\[
\av_{\partial B_{i}^{\delta}} \nabla \left( \frac{\delta^\alpha}{|x-x_j|} \right) \cdot \n = 0,
\]
while for $j=i$,
\[
\av_{\partial B_{i}^{\delta}} \nabla \left( \frac{\delta^\alpha}{|x-x_i|} \right) \cdot \n = - \frac{1}{\delta^{\alpha}(R_{i}^{\delta})^2},
\]
we continue the calculation to get
\begin{align}
& = \bigg| \bar{u}^{\delta} - \frac{1}{R_{i}^{\delta}} + \frac{1 - \bar{u}^{\delta} R_{i}^{\delta}}{R_{i}^{\delta}(1 + \delta^\alpha R_{i}^{\delta})} + \sum_{j} \frac{\delta^\alpha R_{j}^{\delta}}{1 + \delta^\alpha R_{j}^{\delta}} (1 - \bar{u}^{\delta} R_{j}^{\delta}) \frac{\delta^\alpha}{|x-x_j|} \bigg| \nonumber\\
& = \bigg| \sum_{j \neq i} \frac{\delta^\alpha R_{j}^{\delta}}{1+ \delta^\alpha R_{j}^{\delta}} 
( 1 - \bar{u}^{\delta} R_{j}^{\delta} ) \frac{\delta^\alpha}{|x-x_j|} \bigg| \nonumber\\ 
& \leq \delta^{2\alpha-3} \sup R_{j}^{\delta} (1 + \bar{u}^{\delta} \sup R_{j}^{\delta} ) \sum_{j \neq i} \frac{\delta^3}{|x-x_j|} \nonumber \\
& \leq C \delta^{2\alpha-3} \sup R_{j}^{\delta} (1 + \bar{u}^{\delta} \sup R_{j}^{\delta} ).\label{est1} 
\end{align}
In the last step, keeping in mind the assumptions on the spatial distribution of particle centers, the sum is bounded for $j \neq i$ since it is considered as a Riemann-sum approximation to the integral 
\[
\int_\Omega \frac{1}{|x-y|}\, \dd y,
\]
which in turn is bounded using radial symmetry around the singularity and where the factor $\delta^3$ in the sum compensates for the scaling in space.

To further fulfil the maximum principle's outer boundary condition on $\partial \Omega$, we consider the comparison function $\zeta^\delta + z^\delta$, where the auxiliary function $z^\delta$ solves the problem 
\begin{equation}
\left.
\begin{array}{cl}
-\Delta z^\delta = \displaystyle{\int_{\partial \Omega}} \nabla \zeta^\delta \cdot \n &\text{in } \Omega, \medskip\\ 
\nabla z^\delta \cdot \n = - \nabla \zeta^\delta \cdot \n &\text{on } \partial \Omega, \medskip\\
\displaystyle{\int_\Omega} z^\delta = 0, & 
\end{array}\right\}
\end{equation}
such that the comparison function $\zeta^\delta + z^\delta$ has zero normal derivative on $\partial \Omega$. To work with the maximum principle, $z^\delta$ also needs to be harmonic in $\Omega$ and for that we need that the integral $\int_{\partial \Omega} \nabla \zeta^\delta \cdot \n$ vanishes. But, 
\[
\int_{\partial \Omega} \nabla \zeta^\delta \cdot \n = 
\delta^\alpha \sum_{i} \frac{\delta^\alpha R_{i}^{\delta}}{1+ \delta^\alpha R_{i}^{\delta}} 
( 1 - R_{i}^{\delta} \bar{u}^{\delta} ) \int_{\partial \Omega} \nabla \left( \frac{1}{|x-x_i|} \right) \cdot \n,\]
where the last integral equals $-4\pi$, independent of $i$. Thus, $z^\delta$ 
is harmonic if and only if
\[
\bar{u}^{\delta} = \sum_i \frac{R_{i}^{\delta}}{1+ \delta^\alpha R_{i}^{\delta}}
\bigg/ \sum_i \frac{(R_{i}^{\delta})^2}{1+ \delta^\alpha R_{i}^{\delta}},
\]
which is exactly the mean field \eqref{meanfield} as dictated by the single-particle ansatz in the beginning of the section. Moreover, since now $z^\delta$ is harmonic, the divergence theorem further gives
\[
\int_{\partial B_{i}^{\delta}} \nabla z^\delta \cdot \n = 0.
\]

A construction as in Lemma 3 of \cite{niethammer1} and elliptic regularity theory (see Gilbarg and Trudinger \cite{gilbarg-trudinger}) give the estimate
\[
\| z^\delta \|_{L^\infty(\Omega)} \leq C_\varepsilon \delta^{2\alpha -3 -\varepsilon} 
\sup R_{i}^{\delta} ( 1+ \bar{u}^\delta \sup R_{i}^{\delta} ),\]
where $\varepsilon$ is a small positive number.

Let us now apply the maximum principle to the function 
\[
f_+ := u^\delta - \zeta^\delta - z^\delta + 
C \delta^{2\alpha -3 -\varepsilon} \sup R_{i}^{\delta} (1+\bar{u}^\delta \sup R_{i}^{\delta}).
\]
For a large enough constant $C$, say $2C_\varepsilon$, the following hold for
$f_+$: it is harmonic, there holds $\nabla f_+ \cdot \n = 0$ on $\partial
\Omega$ by the construction of $z^\delta$, and for the constants $c_i =
1/ {4 \pi \delta^{2\alpha} (R_{i}^{\delta})^2}$, estimate \eqref{est1}
gives 
\[
u^\delta -\zeta^\delta - z^\delta +C \delta^{2\alpha -3 -\varepsilon} 
\sup R_{i}^{\delta} ( 1+ \bar{u}^\delta \sup R_{i}^{\delta} ) - c_i \int_{\partial B_{i}^{\delta}} 
\nabla ( u^\delta - \zeta^\delta - z^\delta ) \cdot \n \geq 0.
\]
Thus, $f_+$ satisfies the maximum principle's conditions and therefore, $f_+ \geq 0$ in $\Omega \setminus \cup B_{i}^{\delta}$, i.e., 
\[
u^\delta - \zeta^\delta - z^\delta \geq -C \delta^{2\alpha -3 -\varepsilon} \sup R_{i}^{\delta} ( 1+ \bar{u}^\delta \sup R_{i}^{\delta} ).
\]
Using the maximum principle with $-v$ instead of $v$, the function
\[
f_- := u^\delta - \zeta^\delta - z^\delta - C \delta^{2\alpha -3 -\varepsilon} \sup R_{i}^{\delta} (1+ \bar{u}^\delta \sup  R_{i}^{\delta} ),
\]
again satisfies the corresponding conditions and, as above, yields $f_- \leq 0$ in $\Omega \setminus \cup B_{i}^{\delta}$, i.e., 
\[
u^\delta - \zeta^\delta - z^\delta \leq C \delta^{2\alpha -3 -\varepsilon} \sup R_{i}^{\delta} ( 1+\bar{u}^\delta \sup R_{i}^{\delta} ).
\]

Combining the last two inequalities, we get 
\[
\| u^\delta - \zeta^\delta - z^\delta \|_{L^\infty(\Omega \setminus \cup B_{i}^{\delta})} \leq C \delta^{2\alpha -3 -\varepsilon} \sup R_{i}^{\delta} ( 1+ \bar{u}^\delta \sup R_{i}^{\delta} )
\]
and the lemma follows by the triangle inequality using the regularity of $z^\delta$.
\end{proof}

In the previous lemma it is clear that our approach excludes the critical case
$\alpha=3 / 2$. In the following we introduce, for technical reasons, a new
exponent $\gamma >0$ with the property
\[
\delta^\gamma := \max\, \{ \delta^\alpha, \delta^{2\alpha -3}, \delta^{2\alpha -3 -\varepsilon} \}
\]
for each $\alpha$ greater than $3/2 +\varepsilon$.

As a corollary to the previous lemma we can now estimate the effect of the mean field. 
\begin{corollary}\label{uu_bound}
For any time $t\in(0,T)$ and $\gamma >0$, the chemical potential and the mean field satisfy 
\begin{align*}
\| u^\delta - \bar{u}^{\delta} \|_{L^\infty(\Omega \setminus \cup B_{i}^{\delta})}(t) \leq 
C \delta^\gamma \big( 1+ 2 \sup R_{i}^{\delta}(t) \big) \big(1+ \bar{u}^\delta(t) \sup R_{i}^{\delta}(t)\big).
\end{align*}
\end{corollary}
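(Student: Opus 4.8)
The plan is to route the estimate through the approximate solution $\zeta^\delta$, splitting $u^\delta-\bar u^\delta=(u^\delta-\zeta^\delta)+(\zeta^\delta-\bar u^\delta)$ and bounding the two pieces separately before adding them by the triangle inequality. The first piece is handed to us by Lemma~\ref{u_bound}: its bound is $C\delta^{2\alpha-3-\varepsilon}\sup R_{i}^{\delta}(1+\bar u^\delta\sup R_{i}^{\delta})$, and since $\delta^{2\alpha-3-\varepsilon}\le\delta^\gamma$ by the very definition of $\gamma$, this contributes at most $C\delta^\gamma\sup R_{i}^{\delta}(1+\bar u^\delta\sup R_{i}^{\delta})$, which already has, up to a constant, the shape of the right-hand side of the corollary.

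For the second piece I would use the explicit form $\zeta^\delta(t,x)-\bar u^\delta(t)=\sum_{i}\frac{\delta^{2\alpha}R_{i}^{\delta}}{1+\delta^\alpha R_{i}^{\delta}}\big(1-\bar u^\delta R_{i}^{\delta}\big)\frac{1}{|x-x_i|}$, valid for $x\in\Omega\setminus\cup B_{i}^{\delta}$, and split the sum according to which center is closest to $x$. Fixing $x$ and letting $i_0$ realize $\min_i|x-x_i|$, the single term $j=i_0$ is bounded using $x\notin B_{i_0}^{\delta}$, hence $|x-x_{i_0}|\ge\delta^\alpha R_{i_0}^{\delta}$, by $\delta^\alpha|1-\bar u^\delta R_{i_0}^{\delta}|\le\delta^\alpha(1+\bar u^\delta\sup R_{i}^{\delta})$. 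For $j\ne i_0$, minimality of $|x-x_{i_0}|$ gives $|x-x_j|\ge\tfrac12|x_{i_0}-x_j|$, so after factoring out $\delta^{2\alpha-3}\sup R_{j}^{\delta}(1+\bar u^\delta\sup R_{j}^{\delta})$ the remaining sum $\sum_{j\ne i_0}\delta^3/|x-x_j|$ is controlled, exactly as in the proof of Lemma~\ref{u_bound}, by a Riemann-sum approximation to $\int_\Omega|x-y|^{-1}\,\dd y$; the lattice spacing of the centers keeps this finite and uniform in $\delta$ and $x$. This yields $\|\zeta^\delta-\bar u^\delta\|_{L^\infty(\Omega\setminus\cup B_{i}^{\delta})}\le\delta^\alpha(1+\bar u^\delta\sup R_{i}^{\delta})+C\delta^{2\alpha-3}\sup R_{i}^{\delta}(1+\bar u^\delta\sup R_{i}^{\delta})$.

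It then remains to collect everything: add the two bounds, use $\delta^\alpha\le\delta^\gamma$ and $\delta^{2\alpha-3}\le\delta^\gamma$, and absorb the numerical constants into the factor $(1+2\sup R_{i}^{\delta})$ to land on $C\delta^\gamma(1+2\sup R_{i}^{\delta})(1+\bar u^\delta\sup R_{i}^{\delta})$. I expect the only real subtlety to be the geometric step in the second piece — confirming that the nearest-center term is the only one that can come close to the exterior point $x$, and that the leftover sum is a genuine, uniformly bounded Riemann sum — and this is precisely where the hypothesis $\alpha>1$ (so that $\delta^\alpha R\ll\delta$) together with the lattice assumption on the centers enters; the rest is the triangle inequality and bookkeeping of the exponents packaged in $\gamma$.
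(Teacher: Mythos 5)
Your proposal is correct and follows essentially the same route as the paper: split $u^\delta - \bar u^\delta$ via $\zeta^\delta$, invoke Lemma~\ref{u_bound} for the first piece, bound $\zeta^\delta - \bar u^\delta$ by isolating the nearest-center monopole (using $|x-x_i|\geq\delta^\alpha R_i^\delta$ off the particles) and treating the remainder as the same Riemann-sum approximation that appeared in estimate~\eqref{est1}, then collect exponents under $\delta^\gamma$. The only difference is that you make the choice of the nearest center $i_0$ and the bound $|x-x_j|\geq\tfrac12|x_{i_0}-x_j|$ explicit, whereas the paper leaves these geometric details implicit in the reference to \eqref{est1}; this is a harmless clarification, not a different argument.
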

\begin{proof}
By the triangle inequality and Lemma \ref{u_bound} there holds
\begin{equation*}
\| u^\delta - \bar{u}^{\delta} \|_{L^\infty(\Omega \setminus \cup B_{j}^{\delta})} \leq 
\| \zeta^\delta -\bar{u}^\delta \|_{L^\infty(\Omega \setminus \cup B_{j}^{\delta})}  
+ C \delta^{2\alpha -3 -\varepsilon} \sup R_{j}^{\delta} ( 1+ \bar{u}^\delta \sup R_{j}^{\delta} ).
\end{equation*}
To estimate $\| \zeta^\delta -\bar{u}^\delta \|_{L^\infty(\Omega \setminus \cup B_{j}^{\delta})}$, by the definition of $\zeta^\delta$ there holds for $x \in \Omega \setminus \cup B_{j}^{\delta}$,
\begin{align*}
& \Big| \zeta^\delta (t,x) - \bar{u}^\delta (t) \Big| = \bigg| \sum_{j} \frac{\delta^\alpha R_{j}^{\delta}}{1+ \delta^\alpha R_{j}^{\delta}} ( 1 - \bar{u}^{\delta} R_{j}^{\delta} ) \frac{\delta^\alpha}{|x-x_j|} \bigg| \\
& \quad \leq \frac{\delta^\alpha R_{i}^{\delta}}{1+ \delta^\alpha R_{i}^{\delta}} ( 1 + \bar{u}^{\delta} R_{i}^{\delta} ) \frac{\delta^\alpha}{|x-x_i|} + \bigg| \sum_{j \neq i} \frac{\delta^\alpha R_{j}^{\delta}}{1+ \delta^\alpha R_{j}^{\delta}} ( 1 - \bar{u}^{\delta} R_{j}^{\delta} ) \frac{\delta^\alpha}{|x-x_j|} \bigg|
\end{align*}
and since $|x-x_i| \geq \delta^\alpha R_{i}^{\delta}$ in $\Omega \setminus \cup B_{j}^{\delta}$, arguing as in estimate \eqref{est1} gives
\begin{align*}
& \kern-75pt \leq \frac{\delta^\alpha ( 1 + \bar{u}^{\delta} R_{i}^{\delta} ) }{1+ \delta^\alpha R_{i}^{\delta}} + C \delta^{2\alpha -3} \sup R_{j}^{\delta} ( 1+ \bar{u}^\delta \sup R_{j}^{\delta} )\\
& \kern-75pt \leq C (\delta^\alpha + \delta^{2\alpha -3} \sup R_{j}^{\delta} ) ( 1+ \bar{u}^\delta \sup R_{j}^{\delta} ),
\end{align*}
thus,
\[
\| \zeta^\delta -\bar{u}^\delta \|_{L^\infty(\Omega \setminus \cup B_{j}^{\delta})} 
\leq C (\delta^\alpha + \delta^{2\alpha -3} \sup R_{j}^{\delta} ) ( 1+ \bar{u}^\delta \sup R_{j}^{\delta} ),
\]
and finally,
\[
\| u^\delta - \bar{u}^{\delta} \|_{L^\infty(\Omega \setminus \cup B_{j}^{\delta})} \leq 
C (  \delta^\alpha + \delta^{2\alpha -3} \sup R_{j}^{\delta} + \delta^{2\alpha -3 -\varepsilon} \sup R_{j}^{\delta} ) (1+ \bar{u}^\delta \sup R_{j}^{\delta} ).
\]
Using the exponent $\gamma$, we get 
\[
\| u^\delta - \bar{u}^{\delta} \|_{L^\infty(\Omega \setminus \cup B_{i}^{\delta})} \leq 
C \delta^\gamma ( 1 + 2 \sup R_{i}^{\delta} ) (1+ \bar{u}^\delta \sup R_{i}^{\delta} ).\qedhere
\]
\end{proof}

The following lemma gives an estimate for the growth rate of particles in accordance with the reaction-controlled Lifshitz--Slyozov--Wagner theory.
\begin{lemma}\label{law_lemma}
For any time $t\in(0,T)$ and $\gamma > 0$, for the growth rates of particles
holds 
\begin{equation*}
\bigg| \dot{R}_{i}^{\delta} - \bigg( \bar{u}^{\delta} - \frac{1}{R_{i}^{\delta}} \bigg) \bigg| 
\leq C \delta^\gamma (1 + \bar{u}^\delta \sup R_{i}^{\delta}) \big( 1+ (1+ \delta^\gamma \sup R_{i}^{\delta})(1+ 2 \sup R_{i}^{\delta}) \big).
\end{equation*}
\end{lemma}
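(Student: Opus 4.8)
The quickest route combines the pointwise Gibbs--Thomson law \eqref{p3} with Corollary~\ref{uu_bound}. On $\partial B_i^\delta$ the potential $u^\delta$ is the \emph{constant} $1/R_i^\delta+\dot R_i^\delta$, so \eqref{p3} reads $\dot R_i^\delta=u^\delta-1/R_i^\delta$ there, whence
\[
\dot R_i^\delta-\Big(\bar u^\delta-\frac1{R_i^\delta}\Big)=u^\delta-\bar u^\delta\qquad\text{on }\partial B_i^\delta .
\]
As the solutions are piecewise smooth, $u^\delta$ is continuous up to $\partial B_i^\delta$ with that boundary value and $\bar u^\delta$ is a spatial constant, so the estimate of Corollary~\ref{uu_bound} for $\|u^\delta-\bar u^\delta\|_{L^\infty(\Omega\setminus\cup B_i^\delta)}$ passes to $\partial B_i^\delta$ by continuity and gives $\big|\dot R_i^\delta-(\bar u^\delta-1/R_i^\delta)\big|\le C\delta^\gamma(1+2\sup R_i^\delta)(1+\bar u^\delta\sup R_i^\delta)$; the stated inequality follows a fortiori, since $1+2\sup R_i^\delta\le 1+(1+\delta^\gamma\sup R_i^\delta)(1+2\sup R_i^\delta)$.

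To reproduce the precise shape of the asserted bound I would instead keep the monopole approximation explicit. On $\partial B_i^\delta$, where $|x-x_i|=\delta^\alpha R_i^\delta$, the self-term of $\zeta^\delta$ collapses to $\frac1{R_i^\delta}+\frac1{1+\delta^\alpha R_i^\delta}(\bar u^\delta-\frac1{R_i^\delta})$, which is exactly the single-particle boundary value of Section~\ref{growth}, so writing $\dot R_i^\delta=u^\delta-1/R_i^\delta=(u^\delta-\zeta^\delta)+(\zeta^\delta-1/R_i^\delta)$ on $\partial B_i^\delta$ yields
\[
\dot R_i^\delta-\Big(\bar u^\delta-\frac1{R_i^\delta}\Big)=-\frac{\delta^\alpha R_i^\delta}{1+\delta^\alpha R_i^\delta}\Big(\bar u^\delta-\frac1{R_i^\delta}\Big)+\sum_{j\neq i}\frac{\delta^\alpha R_j^\delta}{1+\delta^\alpha R_j^\delta}(1-\bar u^\delta R_j^\delta)\frac{\delta^\alpha}{|x-x_j|}+(u^\delta-\zeta^\delta) .
\]
I would then bound the first term by $\delta^\alpha(1+\bar u^\delta\sup R_i^\delta)$---the genuine single-particle correction, the standalone summand of the statement---the interaction sum by $C\delta^{2\alpha-3}\sup R_j^\delta(1+\bar u^\delta\sup R_j^\delta)$ exactly as in estimate \eqref{est1}, and the last term by Lemma~\ref{u_bound}, which contributes the power $\delta^{2\alpha-3-\varepsilon}$; absorbing $\delta^\alpha$, $\delta^{2\alpha-3}$ and $\delta^{2\alpha-3-\varepsilon}$ into $\delta^\gamma$ (possible precisely because $\alpha>3/2+\varepsilon$) and collecting the $\sup R_i^\delta$-factors produces the claimed bound.

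I do not expect a serious obstacle here---the analytic content sits in Lemma~\ref{u_bound} and Corollary~\ref{uu_bound}---but two points deserve care. First, one should genuinely use the pointwise Gibbs--Thomson identity to evaluate $u^\delta$ on $\partial B_i^\delta$ and resist starting from the flux form \eqref{p1}, $\dot R_i^\delta=c_i\int_{\partial B_i^\delta}\nabla u^\delta\cdot\n$ with $c_i=1/(4\pi\delta^{2\alpha}(R_i^\delta)^2)$: although the flux of the approximation, $c_i\int_{\partial B_i^\delta}\nabla\zeta^\delta\cdot\n=\frac1{1+\delta^\alpha R_i^\delta}(\bar u^\delta-1/R_i^\delta)$, is computed explicitly from the divergence-theorem identities already used in the proof of Lemma~\ref{u_bound}, the remaining flux $c_i\int_{\partial B_i^\delta}\nabla(u^\delta-\zeta^\delta)\cdot\n$ cannot be controlled from an $L^\infty$-bound on the harmonic function $u^\delta-\zeta^\delta$ without losing powers of $\delta^{-\alpha}$, so that route does not close; working with boundary values keeps everything at the level of $L^\infty$. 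Second, one should note that passing the $L^\infty$-estimates of Lemma~\ref{u_bound} and Corollary~\ref{uu_bound} to $\partial B_i^\delta$ is licit because, away from extinction times, $u^\delta$ is smooth up to each particle boundary. The rest is bookkeeping of powers of $\delta$.
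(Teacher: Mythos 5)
Your first approach is correct and is a genuine simplification of the paper's argument, which you would not recover by imitating the diffusion-controlled case. The paper's proof introduces the capacity potential $w_i^\delta$ of $B_i^\delta$ with respect to a concentric ball $B_i^{\lambda\delta}$, integrates by parts to convert the flux $\int_{\partial B_i^\delta}\nabla u^\delta\cdot\n$ into a boundary integral of $u^\delta-\bar u^\delta$ over $\partial B_i^{\lambda\delta}$, and thereby obtains
\[
\Big|\dot R_i^\delta-\big(\bar u^\delta-1/R_i^\delta\big)\Big|\le\delta^\alpha R_i^\delta|\dot R_i^\delta|+\|u^\delta-\bar u^\delta\|_{L^\infty(\Omega\setminus\cup B_i^\delta)},
\]
after which the extra term $\delta^\alpha R_i^\delta|\dot R_i^\delta|$ is controlled via $R_i^\delta|\dot R_i^\delta|\le 1+R_i^\delta|u^\delta|$ (again from \eqref{p3}), producing precisely the additional factor $\bigl(1+(1+\delta^\gamma\sup R_i^\delta)(1+2\sup R_i^\delta)\bigr)$ in the statement. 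That capacity-potential step is essential when the Gibbs--Thomson law has no kinetic-drag term (the diffusion-controlled case treated in \cite{niethammer1}), since there $\dot R_i^\delta$ is only a flux and one must avoid gradient estimates that lose $\delta^{-\alpha}$. You correctly observe that here the boundary condition \eqref{p3} hands you $\dot R_i^\delta=u^\delta-1/R_i^\delta$ pointwise on $\partial B_i^\delta$, hence $\dot R_i^\delta-(\bar u^\delta-1/R_i^\delta)=u^\delta-\bar u^\delta$ there, and Corollary~\ref{uu_bound} (valid on $\partial B_i^\delta$ by continuity of $u^\delta$ up to the boundary) closes the proof in one line with the \emph{stronger} bound $C\delta^\gamma(1+2\sup R_i^\delta)(1+\bar u^\delta\sup R_i^\delta)$; the lemma's weaker-looking bound is an artefact of the paper's route. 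Your second, monopole decomposition on the boundary is also correct and is essentially a re-derivation of Corollary~\ref{uu_bound} specialised to $\partial B_i^\delta$; it clarifies the origin of the powers $\delta^\alpha$, $\delta^{2\alpha-3}$, $\delta^{2\alpha-3-\varepsilon}$ but adds nothing beyond the first argument. One small remark on your aside: the flux $c_i\int_{\partial B_i^\delta}\nabla(u^\delta-\zeta^\delta)\cdot\n$ is not, in fact, hopeless---the paper's $w_i^\delta$-trick is exactly the device that converts such fluxes into boundary-value integrals without gradient loss---but you are right that it is needless effort once \eqref{p3} is used directly.
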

\begin{proof}
Let $w_{i}^{\delta}$ be the capacity potential of the ball $B_{i}^{\delta}$ with respect to a larger ball $B_{i}^{\lambda \delta}:=B(x_i, \lambda \delta^\alpha R_{i}^{\delta})$ for $\lambda > 1$,
i.e., let $w_i$ solve
\begin{equation}\label{capacity}
\left.
\begin{array}{rl}
-\Delta w_{i}^{\delta} = 0 &\text{in } B_{i}^{\lambda \delta} \setminus B_{i}^{\delta},\medskip\\
w_{i}^{\delta} = 0 &\text{on } \partial B_{i}^{\lambda \delta},\medskip\\
w_{i}^{\delta} = 1 &\text{in } B_{i}^{\delta}.
\end{array}\right\}
\end{equation}
An explicit calculation gives 
\begin{equation}
w_{i}^{\delta} = \frac{1}{1-\lambda} \left(1-\frac{\lambda\delta^\alpha R_{i}^{\delta}}{|x-x_i|} \right)
\end{equation}
and also
\begin{equation}\label{cap_prop}
\int_{\partial B_{i}^{\delta}} \nabla w_{i}^{\delta} \cdot \n = \int_{\partial B_{i}^{\lambda \delta}} \nabla w_{i}^{\delta} \cdot \n = 4 \pi \frac{\lambda}{1 - \lambda} \delta^{\alpha} R_{i}^{\delta}.
\end{equation}

Using equations \eqref{p1}, \eqref{p2}, \eqref{p3}, and the Neumann boundary condition, along with the above properties of $w_{i}^{\delta}$, and integrating by parts, gives
\begin{align*}
4\pi\delta^{2\alpha} ({R_{i}^{\delta}})^2 \dot{R}_{i}^{\delta} & = \int_{\partial B_{i}^{\delta}} \nabla u^\delta \cdot \n\\
& = \int_{\partial B_{i}^{\delta}} w_{i}^{\delta} \nabla u^\delta \cdot \n\\
& = -\int_{B_{i}^{\lambda \delta} \setminus B_{i}^{\delta}} \nabla w_{i}^{\delta} \nabla u^\delta\\
& = \int_{\partial B_{i}^{\delta}} u^{\delta} \nabla w_{i}^{\delta} \cdot \n - \int_{\partial B_i^{\lambda \delta}} u^{\delta}  \nabla w_{i}^{\delta} \cdot \n\\
& = \int_{\partial B_{i}^{\delta}} \left( \frac{1}{R_{i}^{\delta}} + \dot{R}_{i}^{\delta} \right)  \nabla w_{i}^{\delta} \cdot \n - \int_{\partial B_i^{\lambda \delta}} u^{\delta}  \nabla w_{i}^{\delta} \cdot \n\\
& = 4 \pi \frac{\lambda}{1-\lambda} \delta^\alpha R_{i}^{\delta} \left(\frac{1}{R_{i}^{\delta}} + \dot{R}_{i}^{\delta} - \bar{u}^{\delta} \right) - \int_{\partial B_i^{\lambda \delta}} (u^{\delta} - \bar{u}^{\delta})  \nabla w_{i}^{\delta} \cdot \n,
\end{align*}
where in the last equation we used \eqref{cap_prop} and added and subtracted $\bar{u}^{\delta}$. After rearranging, we have
\begin{align}\label{law}
\bigg| \dot{R}_{i}^{\delta} - \bigg( \bar{u}^{\delta} - \frac{1}{R_{i}^{\delta}} \bigg) \bigg| 
& =\bigg| \frac{1-\lambda}{\lambda} \delta^\alpha R_{i}^{\delta}\dot{R}_{i}^{\delta} +
\frac{1-\lambda}{4 \pi \lambda \delta^\alpha R_{i}^{\delta}} 
\int_{\partial B_{i}^{\lambda \delta}} ( u^\delta - \bar{u}^{\delta} ) \nabla w_{i}^{\delta} \cdot \n \bigg|\nonumber\\
& \leq \frac{\lambda - 1}{\lambda} \delta^\alpha R_{i}^{\delta} | \dot{R}_{i}^{\delta} | 
+ \frac{\lambda - 1}{4 \pi \lambda \delta^\alpha R_{i}^{\delta}} \,
\bigg| \int_{\partial B_{i}^{\lambda \delta}} ( u^\delta - \bar{u}^{\delta} ) \nabla w_{i}^{\delta} \cdot \n \bigg| \nonumber\\
& \leq \delta^\alpha R_{i}^{\delta} | \dot{R}_{i}^{\delta} | 
+ \| u^\delta - \bar{u}^{\delta} \|_{L^\infty(\Omega \setminus \cup B_{i}^{\delta})},
\end{align}
where in the last step we again used equation \eqref{cap_prop}. But by using equation \eqref{p3} for $u^\delta$ on $\partial B_{i}^{\delta}$ we have
\begin{equation}\label{rr_bound}
 R_{i}^{\delta} | \dot{R}_{i}^{\delta} | \leq 1 + R_{i}^{\delta} |u^\delta| \leq 1 + R_{i}^{\delta} (\| u^\delta - \bar{u}^{\delta}\|_{L^\infty(\Omega \setminus \cup B_{i}^{\delta})} + \bar{u}^{\delta}).
 \end{equation}
Substituting back in \eqref{law} and using Corollary \ref{uu_bound} gives the
final estimate.
\end{proof}

The next lemma ensures that the bounds in the approximation and the growth-rate estimates are indeed uniform.
\begin{lemma}\label{gronwall}
For any time $t\in(0,T)$, the mean field and the radii of the particles are uniformly bounded, i.e.,
\[
\bar{u}^{\delta}(t) \leq C ~~\text{ and }~~ \sup R_{i}^{\delta}(t)  \leq C.
\]
\end{lemma}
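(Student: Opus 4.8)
The plan is to run a Gronwall-type argument for $\rho(t):=\sup_i R_i^\delta(t)$, using the growth-rate estimate of Lemma~\ref{law_lemma} together with an a~priori bound on the mean field extracted from the explicit formula~\eqref{meanfield} and the conservation laws already established. Being a supremum of the finitely many continuous, piecewise-$C^1$ functions $R_i^\delta$ (with only finitely many extinction times in $[0,T]$), $\rho$ is Lipschitz on $[0,T]$; at almost every $t$ it is differentiable with $\dot\rho(t)=\dot R_{i^\ast}^\delta(t)$ for some index $i^\ast$ realizing the maximum. Since a vanishing particle contributes a quantity tending to zero to both sums in~\eqref{meanfield}, the mean field $\bar u^\delta$ is continuous on $[0,T]$, so the lack of smoothness at extinction times creates no difficulty.

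The key point is an a~priori bound of the form $\bar u^\delta(t)\le C(1+\rho(t))$ with $C$ independent of $\delta$. From~\eqref{meanfield} one lower bounds the denominator by $\tfrac{1}{1+\delta^\alpha\rho}\sum_i(R_i^\delta)^2\ge\tfrac{1}{\rho(1+\delta^\alpha\rho)}\sum_i(R_i^\delta)^3$ and invokes conservation of volume, while the numerator is at most $\sum_i R_i^\delta$; thus
\[
\bar u^\delta(t)\;\le\;\rho(t)\,(1+\delta^\alpha\rho(t))\,\frac{\sum_i R_i^\delta(t)}{\sum_i R_i^\delta(0)^3}\,.
\]
By the Cauchy--Schwarz inequality, the energy estimate of Lemma~\ref{a_priori}, the bound $N(t)\le N_{\mathrm{i}}$ on the number of active particles, and the assumption on the initial radii, one has $\sum_i R_i^\delta(t)\le N_{\mathrm{i}}R_0$; together with the normalization of the initial data (as in the discussion after Lemma~\ref{a_priori}) this gives $\bar u^\delta(t)\le C(1+\rho(t))$ as long as, say, $\delta^\alpha\rho(t)\le1$. (One may also note that at each $t$ some active particle satisfies $(R_i^\delta(t))^3\ge N_{\mathrm{i}}^{-1}\sum_j R_j^\delta(0)^3$, so $\rho$ stays bounded below uniformly in $\delta$ and $t$.)

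To close the estimate, insert $\bar u^\delta\le C(1+\rho)$ into Lemma~\ref{law_lemma}: dropping the favourable term $-1/\rho$, one gets at a.e.\ $t$ with $\delta^\alpha\rho\le1$
\[
\dot\rho\;\le\;\bar u^\delta+C\delta^\gamma(1+\bar u^\delta\rho)\big(1+(1+\delta^\gamma\rho)(1+2\rho)\big)\;\le\;C(1+\rho)+C\delta^\gamma P(\rho)
\]
for a fixed polynomial $P$. Now a continuation argument finishes the proof: on the maximal subinterval of $[0,T]$ on which $\rho(t)\le A:=1+2\rho(0)e^{CT}$, the perturbation $C\delta^\gamma P(\rho)$ is at most $C\delta^\gamma P(A)$, which is $\le1$ once $\delta$ is small enough (possible because $\gamma>0$, i.e.\ $\alpha>3/2+\varepsilon$; this also makes $\delta^\alpha\rho\le\delta^\alpha A\le1$), so there $\dot\rho\le C(1+\rho)+1$ and Gronwall forbids $\rho$ from reaching $A$ before $T$. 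Hence $\rho(t)\le A$ on $[0,T]$, and consequently $\bar u^\delta(t)\le C(1+A)\le C$ there, with $A$ and $C$ depending only on $T$, $R_0$, and the initial normalization.

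The main obstacle is the a~priori bound on $\bar u^\delta$: unlike the diffusion-controlled case there is no screening available, so the bound must be read off the algebraic structure of~\eqref{meanfield} and the two conservation properties — volume conserved, and (via Lemma~\ref{a_priori}) energy and surface area decreasing — which forces a careful accounting of the number of active particles and of the initial normalization, and it is here that the limit of vanishing surface-area density really enters. A secondary difficulty is that the error term in Lemma~\ref{law_lemma} is superlinear in $\rho$ (and in $\bar u^\delta$), so the differential inequality for $\rho$ cannot be integrated directly and one has to bootstrap on a maximal interval, exploiting the smallness of $\delta^\gamma$.
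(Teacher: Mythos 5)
Your proposal is correct and follows essentially the same strategy as the paper's proof: read an a priori bound on $\bar{u}^{\delta}$ in terms of $\rho:=\sup R_{i}^{\delta}$ off the explicit formula~\eqref{meanfield} together with volume conservation, feed it into the growth-rate estimates to obtain a differential inequality for the maximal radius, and close with a Gronwall/continuation argument that exploits the smallness of $\delta^{\gamma}$. The only cosmetic differences are that you use Cauchy--Schwarz plus the energy decay of Lemma~\ref{a_priori} to bound $\sum_i R_{i}^{\delta}$ where the paper uses H\"older plus volume conservation (both implicitly requiring the same lower normalization of the initial total volume), and that you bootstrap on a fixed $T$-dependent cap whereas the paper works on the $\delta$-dependent set $\{\,t:\rho(t)\le \delta^{-\gamma/4}\,\}$.
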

\begin{proof}
For the mean field \eqref{meanfield} holds 
\[
\bar{u}^\delta = \sum_i \frac{R_{i}^{\delta}}{1+\delta^\alpha R_{i}^{\delta}} \bigg/ \sum_i \frac{(R_{i}^{\delta})^2}{1+ \delta^\alpha R_{i}^{\delta}}
\leq \sup R_{i}^{\delta} (1 + \delta^\alpha \sup R_{i}^{\delta}) \sum_i R_{i}^{\delta} \bigg/ \sum_i (R_{i}^{\delta})^{3}
\] 
and since by H\"older's inequality
\[
\sum_i R_{i}^{\delta} \bigg/ \sum_i (R_{i}^{\delta})^{3} \leq \bigg( \sum_i 1 \bigg/ \sum_i
(R_{i}^{\delta})^{3} \bigg)^{2/3} \leq \bigg( 1 \bigg/ \sum_i \delta^3
(R_{i}^{\delta})^{3} \bigg)^{2/3},
\]
conservation of the total volume of particles gives
\[
\bar{u}^{\delta} \leq C \sup R_{i}^{\delta} (1 + \delta^\alpha \sup R_{i}^{\delta}).
\]
or, using the exponent $\gamma$,
\begin{equation}\label{ubar_estimate}
\bar{u}^{\delta} \leq C \sup R_{i}^{\delta} (1 + \delta^\gamma \sup R_{i}^{\delta}).
\end{equation}

Consider now the set 
\[
A:= \left\{ t \mid \sup R_{i}^{\delta}(t) \leq \frac{1}{\delta^{\gamma / 4}}
\right\};
\]
then, for times $t\in A$, plugging \eqref{ubar_estimate} in estimate \eqref{rr_bound} and using Corollary \ref{uu_bound} gives
\[
\frac{\dd}{\dd t} (R_{i}^{\delta})^2 \leq C \sup (R_{i}^{\delta})^2 + C.
\]

Integrating over the time interval $(0,T)$, Gronwall's inequality implies that
\[
{\textstyle \sup_i}\, {\textstyle \sup_{t\in A \cap [0,T]}}\, (R_{i}^{\delta})^2 \leq C(T),
\]
therefore, $[0,T] \subset A$, i.e., the radii are bounded up to time $T$ as is the mean field by estimate \eqref{ubar_estimate}.
\end{proof}

Finally, the following lemma gives control over the growth rates of vanishing particles and will prove useful for some regularity considerations in the next section.
\begin{lemma}\label{reg_ineq}
For any time $t \in (0,T)$ such that $R_{i}^{\delta}(t) \leq 1 / {4 \sup_{t,
\delta} \bar{u}^{\delta}(t)}$ and for suf\-fi\-cientl\-y small $\delta$, there
holds 
\[
-\frac{2}{R_{i}^{\delta}} \leq \dot{R}_{i}^{\delta} \leq -\frac{1}{2 R_{i}^{\delta}} < 0
\]
and
\[
\sqrt{t_{i} -t} \leq R_{i}^{\delta} \leq 2\sqrt{t_{i} -t}.
\]
\end{lemma}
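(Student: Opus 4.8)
The plan is to combine the growth-rate estimate of Lemma~\ref{law_lemma} with the uniform bounds of Lemma~\ref{gronwall}: once a particle is small, its radius should satisfy a differential inequality close to $\dot R_i^\delta=-1/R_i^\delta$, which can then be integrated up to the extinction time.

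First I would set $c_0:=1/\big(4\sup_{t,\delta}\bar u^\delta\big)$, a strictly positive constant by Lemma~\ref{gronwall}, so that the hypothesis reads $R_i^\delta(t)\le c_0$. Since Lemma~\ref{gronwall} also bounds $\bar u^\delta$ and $\sup_j R_j^\delta$ on $[0,T]$ by a constant independent of $\delta$, the right-hand side of Lemma~\ref{law_lemma} collapses to $C\delta^\gamma$ with $C$ uniform in $\delta$, $i$, $t$; thus $\dot R_i^\delta=\big(\bar u^\delta-1/R_i^\delta\big)+E_i^\delta$ with $|E_i^\delta|\le C\delta^\gamma$. Next I would observe that whenever $R_i^\delta\le c_0$ one has $\bar u^\delta R_i^\delta\le 1/4$, hence $\bar u^\delta-1/R_i^\delta\le -3/(4R_i^\delta)$, while $\bar u^\delta-1/R_i^\delta\ge -1/R_i^\delta$ because $\bar u^\delta\ge 0$ by \eqref{meanfield}. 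Choosing $\delta$ small enough that $C\delta^\gamma c_0\le 1/4$ — which can be done once and for all, as $c_0$ is a fixed constant — forces $C\delta^\gamma\le 1/(4R_i^\delta)$ as long as $R_i^\delta\le c_0$, and then
\[
\dot R_i^\delta\le -\frac{3}{4R_i^\delta}+\frac{1}{4R_i^\delta}=-\frac{1}{2R_i^\delta}<0 \qquad\text{and}\qquad \dot R_i^\delta\ge -\frac{1}{R_i^\delta}-\frac{1}{4R_i^\delta}\ge -\frac{2}{R_i^\delta},
\]
which is the first displayed inequality.

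For the second, I would multiply through by $2R_i^\delta>0$ to get $-4\le\frac{\dd}{\dd t}(R_i^\delta)^2\le -1$ on the set where $R_i^\delta\le c_0$. The sign $\dot R_i^\delta<0$ just obtained shows that this set, once entered, is an interval on which $R_i^\delta$ decreases monotonically; moreover $(R_i^\delta)^2$ decreases at rate at least one, so the particle must vanish at a finite time $t_i$ (which is $<T$ by the standing assumptions), with $R_i^\delta(s)\to 0$ as $s\to t_i^-$ by continuity. Integrating the differential inequality from $t$ to $s\in(t,t_i)$ and letting $s\to t_i^-$ then gives $t_i-t\le (R_i^\delta(t))^2\le 4(t_i-t)$, and taking square roots finishes the proof.

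I expect the main obstacle to be purely a matter of bookkeeping rather than of ideas: one must verify that the threshold on $\delta$ is genuinely uniform (it is, because $R_i^\delta$ ranges over $(0,c_0]$ with $c_0$ a fixed constant coming from Lemma~\ref{gronwall}) and that $\{\,t\in(0,t_i): R_i^\delta(t)\le c_0\,\}$ is an interval ending at $t_i$, so that integrating up to extinction is legitimate — both of which follow from the sign of $\dot R_i^\delta$ established in the first step. Nothing beyond Lemmas~\ref{law_lemma} and~\ref{gronwall} is required.
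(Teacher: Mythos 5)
Your proof is correct, and it reaches the upper bound $\dot R_i^\delta \le -1/(2R_i^\delta)$ and the extinction estimate exactly as the paper does, via Lemma~\ref{law_lemma}, Lemma~\ref{gronwall}, and a sub/supersolution (integration) argument. Where you differ is the lower bound $\dot R_i^\delta \ge -2/R_i^\delta$. You obtain it from the same Lemma~\ref{law_lemma} decomposition, using $\bar u^\delta \ge 0$ and absorbing the $\mathrm{O}(\delta^\gamma)$ error for small $\delta$, whereas the paper derives it separately by a barrier argument: it introduces the single-particle comparison function $g = \tfrac{\delta^\alpha R_i^\delta}{1+\delta^\alpha R_i^\delta}\tfrac{\delta^\alpha}{|x-x_i|}$, applies the maximum principle of Lemma~\ref{maximum_principle} to $u^\delta - g$ to deduce $u^\delta \ge g$, and then uses that $u^\delta = g$ on $\partial B_i^\delta$ together with Hopf-type monotonicity to get $\nabla u^\delta\cdot\n \ge \nabla g\cdot\n$, hence $\dot R_i^\delta \ge -\tfrac{1}{R_i^\delta(1+\delta^\alpha R_i^\delta)} \ge -\tfrac{2}{R_i^\delta}$. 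The paper's lower bound is therefore unconditional — it holds for every $\delta$ and every radius, without invoking the $\mathrm{O}(\delta^\gamma)$ growth-rate estimate — while yours is conditional on $R_i^\delta$ being below the threshold $c_0$ and on $\delta$ being small. Since the lemma's statement already posits exactly those hypotheses, your shortcut is legitimate here and slightly more economical; the paper's barrier argument buys a stronger, hypothesis-free inequality that would remain useful in other contexts.
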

\begin{proof}
For 
\[
g := \frac{\delta^\alpha R_{i}^{\delta}}{1 + \delta^\alpha R_{i}^{\delta}} \frac{\delta^\alpha}{|x - x_i|},
\]
it can be verified that the function $u^\delta - g$ satisfies the assumptions of
the maximum principle in Lemma \ref{maximum_principle}  for the constants $c_i =
{1}/{4 \pi \delta^{2\alpha} (R_{i}^{\delta})^2}$, thus yielding $u^\delta
\geq g$ in $\Omega \setminus \cup B_{i}^{\delta}$. But since $u^\delta = g$ on
the boundary $\partial B_{i}^{\delta}$, monotonicity implies that $\nabla
u^\delta \cdot \n \geq \nabla g \cdot \n$ on $\partial B_{i}^{\delta}$ and
taking the average integrals over $\partial B_{i}^{\delta}$ we have
\[ 
\dot{R}_{i}^{\delta} \geq - \frac{1}{R_{i}^{\delta}(1 + \delta^\alpha R_{i}^{\delta})} \geq -\frac{2}{R_{i}^{\delta}}.
\]

Moreover, Lemma \ref{law_lemma} gives 
\[
\dot{R}_{i}^{\delta} \leq \bar{u}^{\delta} - \frac{1}{R_{i}^{\delta}} + C \delta^\gamma (1 + \bar{u}^\delta \sup R_{i}^{\delta}) \big( 1+ (1+ \delta^\gamma \sup R_{i}^{\delta})(1+ 2 \sup R_{i}^{\delta}) \big).
\]
Using now the assumption that $R_{i}^{\delta} \leq {1}/{4 \sup_{t, \delta}
\bar{u}^{\delta}}$ and since from Lemma \ref{gronwall} it follows that  for
sufficiently small $\delta$ the $\mathrm{O}(\delta^\gamma)$ term is uniformly
bounded by ${1}/{4 R_{i}^{\delta}}$, we get
\[
\dot{R}_{i}^{\delta} \leq \frac{1}{4 R_{i}^{\delta}} - \frac{1}{R_{i}^{\delta}} + \frac{1}{4 R_{i}^{\delta}} \leq - \frac{1}{2 R_{i}^{\delta}}.
\]

Let now 
\[
y_1 := \sqrt{t_i - t},\quad y_2 := 2\sqrt{t_i - t}
\]
be sub- and supersolutions that respectively solve
\[
\dot{y}_1 = - \frac{1}{2y_1},\quad \dot{y}_2 = - \frac{2}{y_2}.
\]
By comparison, we get the lemma's second assertion, i.e., $y_1 \leq R_{i}^{\delta} \leq y_2.$
\end{proof}

\section{Homogenization}\label{homogenization}
In order to pass to the homogenization limit of infinitely-many particles, we need 
first describe the particle-radius density in the limit. To that end, define 
at any time $t\in(0,T)$ the empirical measure $\nu_{t}^{\delta}$ as 
\[
\langle\phi,\nu_{t}^{\delta}\rangle = \int \phi\big(t, R_{i}^{\delta}(t) \big)\, \dd \nu_{t}^{\delta} := \frac{1}{N_{\mathrm{i}}} \sum \phi\big(t, R_{i}^{\delta}(t) \big)\quad \text{for }  \phi \in C_{\mathrm{c}},
\]
i.e., for functions $\phi(t,R)$ continuous and compactly supported in the radius variable.

Using now the estimates from the previous section, we can prove the following
\begin{lemma}\label{limits}
For a subsequence $\delta \to 0$ and for a function $\bar{u} \in W^{1,\,p}(0,T)$, for $p<2$, holds 
\begin{align*}
\bar{u}^{\delta} & \to \bar{u}\quad \text{in } L^2(0,T), \\
u^{\delta} & \to \bar{u}\quad \text{in } L^2\big( 0,T; H^1(\Omega) \big).
\end{align*}
Furthermore, the measures $\nu^{\delta}_{t}$ converge to a family $\nu_t$ of probability measures such that 
\[
\int \phi\, \dd \nu_{t}^{\delta} \to \int \phi\, a(t)\, \dd \nu_t\quad \text{uniformly in } t,
\]
where $a(t)$ denotes the percentage of active particles in the limit.
\end{lemma}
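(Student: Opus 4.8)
The plan is to handle the three convergences in turn and to extract a single common subsequence. For the mean field, Lemma~\ref{gronwall} already supplies a uniform $L^\infty(0,T)$ bound on $\bar u^\delta$, so the crux is a uniform bound on $\dot{\bar u}^\delta$ in $L^p(0,T)$ for each $p<2$. I would write $\bar u^\delta = a^\delta/b^\delta$ with $a^\delta := \frac{1}{N_{\mathrm{i}}}\sum_i R_i^\delta/(1+\delta^\alpha R_i^\delta)$ and $b^\delta := \frac{1}{N_{\mathrm{i}}}\sum_i (R_i^\delta)^2/(1+\delta^\alpha R_i^\delta)$; by conservation of volume together with $\sup_i R_i^\delta \le C$, the denominator $b^\delta$ is bounded above and below by positive constants. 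Differentiating, $|\dot b^\delta|$ is controlled by $\frac{1}{N_{\mathrm{i}}}\sum_i R_i^\delta|\dot R_i^\delta|$, which by Cauchy--Schwarz in $i$ and the normalized energy equality of Lemma~\ref{a_priori} is bounded in $L^2(0,T)$; while $|\dot a^\delta|$ is controlled by $\frac{1}{N_{\mathrm{i}}}\sum_i|\dot R_i^\delta|$, and here I would split the particles into those with $R_i^\delta$ bounded below, where $|\dot R_i^\delta|\le C$ by Lemmas~\ref{law_lemma} and \ref{gronwall}, and those in their extinction phase, where Lemma~\ref{reg_ineq} gives $|\dot R_i^\delta(t)|\le 2/\sqrt{t_i-t}$. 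Using Minkowski's inequality over $i$ and the fact that $(t_i-t)^{-1/2}\in L^p$ exactly when $p<2$, one obtains the desired bound, and this is precisely what pins the exponent. Hence $\bar u^\delta$ is bounded in $W^{1,p}(0,T)$, and the compact Sobolev embedding $W^{1,p}(0,T)\hookrightarrow\hookrightarrow L^2(0,T)$ yields a subsequence with $\bar u^\delta\to\bar u$ in $L^2(0,T)$ and $\bar u\in W^{1,p}(0,T)$.

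For the second convergence, Corollary~\ref{uu_bound} together with Lemma~\ref{gronwall} gives $\|u^\delta-\bar u^\delta\|_{L^\infty(\Omega\setminus\cup B_i^\delta)}\le C\delta^\gamma$; inside each particle $u^\delta$ is extended by its constant boundary value $1/R_i^\delta+\dot R_i^\delta$, and rearranging Lemma~\ref{law_lemma} (again with Lemma~\ref{gronwall}) shows $|1/R_i^\delta+\dot R_i^\delta-\bar u^\delta|\le C\delta^\gamma$, so in fact $\|u^\delta-\bar u^\delta\|_{L^\infty((0,T)\times\Omega)}\le C\delta^\gamma\to 0$. Since $\bar u^\delta\to\bar u$ in $L^2(0,T)$, hence in $L^2((0,T)\times\Omega)$ being $x$-independent, this gives $u^\delta\to\bar u$ in $L^2((0,T)\times\Omega)$. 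Finally $\nabla u^\delta$ vanishes inside the particles, while the normalized energy equality bounds $\int_0^T\!\!\int_{\Omega\setminus\cup B_i^\delta}|\nabla u^\delta|^2$ by $C N_{\mathrm{i}}\delta^{2\alpha}\le C\delta^{2\alpha-3}\to 0$ since $\alpha>3/2$; therefore $\nabla u^\delta\to 0$ in $L^2((0,T)\times\Omega)$ and $u^\delta\to\bar u$ in $L^2\big(0,T;H^1(\Omega)\big)$ along the same subsequence, which in particular reconfirms that $\bar u$ does not depend on $x$.

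For the measures I would first replace $\nu_t^\delta$ by the genuine probability measures $\mu_t^\delta:=\frac{1}{N_{\mathrm{i}}}\sum_{i\in N_{\mathrm{i}}}\delta_{R_i^\delta(t)}$, assigning an extinct particle the radius $0$, so that $\nu_t^\delta$ and $\mu_t^\delta$ differ only by the atom $(1-N(t)/N_{\mathrm{i}})\delta_0$. By Lemma~\ref{gronwall} all $\mu_t^\delta$ live on the fixed compact $[0,C]$, so the family is tight. For equicontinuity in $t$, uniform in $\delta$, I would split once more: particles with $R_i^\delta\ge\varepsilon_0$ contribute to $\frac{\dd}{\dd t}\langle\phi,\mu_t^\delta\rangle$ a term bounded by a constant depending on $\phi$ and $\varepsilon_0$ via Lemmas~\ref{law_lemma} and \ref{gronwall}, while particles with $R_i^\delta<\varepsilon_0$ are, for $\varepsilon_0$ small, monotonically shrinking by Lemma~\ref{reg_ineq}, so the total variation they contribute to $\langle\phi,\mu_t^\delta\rangle$ along a time interval is at most $\frac{1}{N_{\mathrm{i}}}\#\{i\}\int_0^{\varepsilon_0}|\partial_R\phi|$, which is small with $\varepsilon_0$. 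An Arzel\`a--Ascoli argument in $C([0,T])$ for $\phi$ ranging over a countable dense family, a diagonal extraction, and tightness then produce a subsequence along which $\mu_t^\delta\rightharpoonup\mu_t$ for every $t$ with $\langle\phi,\mu_t^\delta\rangle\to\langle\phi,\mu_t\rangle$ uniformly in $t$, the limit family being weakly continuous. Since the test functions are compactly supported away from $R=0$, $\langle\phi,\nu_t^\delta\rangle=\langle\phi,\mu_t^\delta\rangle$, so setting $a(t):=\mu_t\big((0,\infty)\big)$ and $\nu_t:=\mu_t|_{(0,\infty)}/a(t)$ (any probability measure when $a(t)=0$) gives $\int\phi\,\dd\nu_t^\delta\to a(t)\int\phi\,\dd\nu_t$ uniformly in $t$, with each $\nu_t$ a probability measure. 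That $a(t)$ is the limiting fraction of active particles follows from Helly's theorem applied to the non-increasing functions $t\mapsto N(t)/N_{\mathrm{i}}$, together with the observation — again from Lemma~\ref{reg_ineq} — that an active particle with $R_i^\delta(t)\le\rho$ is extinct before time $t+\rho^2$, so the mass of $\mu_t^\delta$ near $0$ carried by still-active particles is negligible in the limit.

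The main obstacle is the equicontinuity estimate for $\langle\phi,\mu_t^\delta\rangle$ and, in the same spirit, the $L^p$ bound on $\dot{\bar u}^\delta$: both hinge on separating the bulk of the particles, where the velocities are uniformly bounded, from the vanishing ones, where $|\dot R_i^\delta|$ blows up but the radius is monotone and $(t_i-t)^{-1/2}$-controlled. It is this monotone-extinction structure from Lemma~\ref{reg_ineq} that does the compensating, and it is exactly what forces the integrability exponent to be $p<2$.
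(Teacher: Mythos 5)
Your argument for the first two convergences follows the paper's path essentially unchanged: the $L^p(0,T)$, $p<2$, bound on $\dot{\bar u}^\delta$ comes from the $(t_i-t)^{-1/2}$ blow-up rate of Lemma~\ref{reg_ineq} for nearly-extinct particles and the uniform bound on $|\dot R_i^\delta|$ elsewhere, combined with Rellich--Kondrachov (you merely spell out the quotient rule that the paper compresses into ``conservation of volume and boundedness of the radii''); and for $u^\delta$ you go through Corollary~\ref{uu_bound} rather than the paper's $\zeta^\delta$, but those two differ by $O(\delta^\gamma)$ in any case, and both arguments then use the energy identity of Lemma~\ref{a_priori} with $\alpha>3/2$ to kill $\nabla u^\delta$ in $L^2$. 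Where you genuinely diverge is in the measure part. The paper treats $\nu_t^\delta\,\dd t$ as a bounded family of measures on $[0,T]\times\R_+$, extracts a weak-* limit, identifies its $[0,T]$-marginal as $a(t)\,\dd t$ via the $BV\cap L^\infty\hookrightarrow\hookrightarrow L^2$ compactness of $N(t)/N_{\mathrm i}$, and disintegrates by the slicing theorem cited from Evans. You instead complete $\nu_t^\delta$ to probability measures $\mu_t^\delta$ with an atom at $R=0$, prove tightness (from Lemma~\ref{gronwall}) and an $\varepsilon_0$-split equicontinuity in $t$ (bounded velocities for $R_i^\delta\ge\varepsilon_0$ via Lemmas~\ref{law_lemma} and~\ref{gronwall}, monotone shrinkage for $R_i^\delta<\varepsilon_0$ via Lemma~\ref{reg_ineq}), and run Arzel\`a--Ascoli over a countable dense set of test functions. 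Your route is more elementary and, usefully, produces the uniform-in-$t$ convergence asserted in the lemma directly, which the slicing argument only yields with additional work; the paper's route is shorter and does not lean so heavily on the monotone-extinction structure. One point you should tighten: your $a(t)=\mu_t\bigl((0,\infty)\bigr)$ is a priori only $\le$ the Helly limit of $N(t)/N_{\mathrm i}$, since a weak-* limit may lose mass to the atom at $0$; the bound ``$R_i^\delta(t)\le\rho\Rightarrow t_i\le t+\rho^2$'' controls that defect only away from the jump set of the $BV$ limit. Since the test functions in the theorem vanish near $R=0$ this does not affect anything downstream, but if you want $a$ to be literally the limiting fraction of active particles, as the lemma's wording suggests, that identification needs a sentence of its own.
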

\begin{proof}
As a consequence of Lemma \ref{reg_ineq}, we have 
\[
\sup \|\dot{R}_{i}^{\delta} \|_{L^p(0,T)} \leq C(p)\quad \text{for } p < 2,
\]
thus, conservation of volume and boundedness of the radii give for $p<2$,
\[
\bigg\|\frac{\dd}{\dd t} \bar{u}^{\delta} \bigg\|_{L^p(0,T)} \leq C \sup \| \dot{R}_{i}^{\delta} \|_{L^p(0,T)} \leq C.
\]
Therefore, $\bar{u}^{\delta} \in W^{1,\,p}(0,T)$, for $p<2$, and the compactness following from the Rellich--Kondrachov theorem gives that $\bar{u}^{\delta}$ converges to a limit $\bar{u}$ in $L^2$.

Taking into consideration that $u^\delta$ is extended to the whole of $\Omega$ and using the lemmas in the previous section, $\zeta^\delta$ converges to $\bar{u}$ in $L^2(\Omega)$ and $u^\delta - \zeta^\delta$ converges uniformly to $0$ as $\delta \to 0$, therefore, $u^\delta$ converges to $\bar{u}$ in $L^2(\Omega)$. By the energy equality in Lemma \ref{a_priori} we have further control over $\|\nabla u^\delta \|_{L^2}$ and thus, we have strong convergence in $L^2(0,T;H^1(\Omega))$.

For the measures $\nu_{t}^{\delta}$ holds 
\[
\| \nu_{t}^{\delta} \| := \textstyle{\sup_{\| \phi \|_{C_{\mathrm{c}}}\leq 1}} | \langle \phi , \nu_{t}^{\delta} \rangle | \leq 1
\]
in the norm of $(C_{\mathrm{c}})^*$, so for a subsequence $\delta \to 0$ there holds that 
$\nu^{\delta}_{t}$ converges weakly-* to $\nu_t$. Furthermore, for positive functions $\phi$ the limit measure $\nu_t$ is nonnegative and from this it follows that $\nu_t$ becomes zero if there are no particles left in the system.

Choosing now a function $\psi(t)$ that depends only on time, we calculate
\[
\int \psi(t)\, \dd \nu_{t}^{\delta} = \frac{1}{N_{\mathrm{i}}} \psi(t) \sum 1 = \frac{N(t)}{N_{\mathrm{i}}} \psi(t).
\]
The ratio ${N}/{N_{\mathrm{i}}}$ is the percentage of active particles at
time $t$. This ratio is bounded by $1$ and decreasing, therefore it is uniformly
bounded in the space $BV(0,T)$ and by the compact embedding of $BV(0,T) \cap
L^\infty(0,T)$ in $L^2(0,T)$, it converges in $L^2$, for a subsequence $\delta
\to 0$, to a limit $a \in BV(0,T)$.

If we project now the measure $\nu_t$ to the interval $[0,T]$, we get that the projection satisfies
\[
\proj_{[0,\,T]} \nu_t = a(t)\, \dd t
\]
and according to \cite[Ch.~1, Thm.~10]{evans}, the decomposition  and convergence to $\nu_t$ follow from the slicing of measures.
\end{proof}

We conclude with the following theorem which states that the limit measure $\nu_t$ satisfies 
the Lifshitz--Slyozov--Wagner equation in a weak sense. Note that in the theorem's statement, the initial condition is defined as
\[
\int \phi\big(t, R_{i}^{\delta}(t) \big)\, \dd \nu_{0}^{\delta} := \frac{1}{N_{\mathrm{i}}} \sum \phi\big(0, R_{i}^{\delta}(0) \big).
\]

\begin{theorem}
The measure $\nu_t$ satisfies the Lifshitz--Slyozov--Wagner equation in the sense that 
\begin{equation}\label{weak_lsw}
\int \left\{ \frac{\partial}{\partial t} \phi (t,R) + \left( \bar{u} - \frac{1}{R} \right) \frac{\partial}{\partial R} \phi (t,R) \right\}\, a(t)\, \dd \nu_t + \int \phi(0,R)\, \dd \nu_0 = 0,
\end{equation}
for all smooth and compactly supported functions $\phi \in C_{\mathrm{c}}^{\infty} ([0,T) \times \R_+ )$, where the mean field $\bar{u}$ is given by
\[
\bar{u} = \int R\, \dd \nu_t  \,\bigg/  \int R^2\, \dd \nu_t.
\]
\end{theorem}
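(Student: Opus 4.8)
The plan is to test the empirical measures against a fixed $\phi\in C_{\mathrm{c}}^{\infty}([0,T)\times\R_+)$, derive an exact identity at the level of $\delta$ from the particle equations, and then pass to the limit by means of Lemma~\ref{limits}. Let $[\rho,M]\subset(0,\infty)$ contain the $R$-support of $\phi$. For each of the finitely many particles the map $t\mapsto\phi(t,R_{i}^{\delta}(t))$ is continuous and piecewise-$C^{1}$, hence absolutely continuous, on $[0,T)$, and its a.e.\ derivative $\partial_t\phi+\dot R_{i}^{\delta}\,\partial_R\phi$ is bounded: on $\{R_{i}^{\delta}(t)<\rho\}$ one has $\partial_R\phi(t,R_{i}^{\delta}(t))=\partial_t\phi(t,R_{i}^{\delta}(t))=0$, while on $\{R_{i}^{\delta}(t)\ge\rho\}$ the velocity $\dot R_{i}^{\delta}=u^{\delta}|_{\partial B_{i}^{\delta}}-1/R_{i}^{\delta}$ (by \eqref{p3}) is controlled through Corollary~\ref{uu_bound} and Lemma~\ref{gronwall} (cf.\ also the extinction rate of Lemma~\ref{reg_ineq}). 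Applying the fundamental theorem of calculus to $t\mapsto\phi(t,R_{i}^{\delta}(t))$, summing over the particles, dividing by $N_{\mathrm{i}}$, and using $\phi(T,\cdot)=0$ together with $\phi(t,0)=\partial_t\phi(t,0)=0$, gives
\begin{equation*}
-\int\phi(0,R)\,\dd\nu_{0}^{\delta}=\int_{0}^{T}\Big(\big\langle\partial_t\phi,\nu_{t}^{\delta}\big\rangle+\frac{1}{N_{\mathrm{i}}}\sum_i\dot R_{i}^{\delta}(t)\,\partial_R\phi\big(t,R_{i}^{\delta}(t)\big)\Big)\,\dd t .
\end{equation*}

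Next I would insert the growth law. By Lemma~\ref{law_lemma} together with the uniform bounds of Lemma~\ref{gronwall} one has $\dot R_{i}^{\delta}=\bar{u}^{\delta}-1/R_{i}^{\delta}+E_{i}^{\delta}$ with $\sup_i|E_{i}^{\delta}(t)|\le C\delta^{\gamma}$ uniformly in $t$, and since $\partial_R\phi(t,\cdot)$ and $\tfrac{1}{R}\,\partial_R\phi(t,\cdot)$ are admissible test functions for $\nu_{t}^{\delta}$ (their $R$-support lies in $[\rho,M]$) while $\|\nu_{t}^{\delta}\|\le1$, the error contributes only an $\mathrm{O}(\delta^{\gamma})$ term and the identity becomes
\begin{equation*}
-\int\phi(0,R)\,\dd\nu_{0}^{\delta}=\int_{0}^{T}\Big(\big\langle\partial_t\phi,\nu_{t}^{\delta}\big\rangle+\bar{u}^{\delta}(t)\,\big\langle\partial_R\phi,\nu_{t}^{\delta}\big\rangle-\big\langle\tfrac{1}{R}\,\partial_R\phi,\nu_{t}^{\delta}\big\rangle\Big)\,\dd t+\mathrm{O}(\delta^{\gamma}) .
\end{equation*}
I then pass to the limit along the subsequence of Lemma~\ref{limits}: the pairings against $\phi(0,\cdot)$, $\partial_t\phi$, and $\tfrac{1}{R}\,\partial_R\phi$ converge to $a(t)$ times the corresponding pairings against $\nu_t$, uniformly in $t$, so their time integrals converge. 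For the bilinear term I write $\int_0^T\bar{u}^{\delta}g^{\delta}-\int_0^T\bar{u}\,g=\int_0^T\bar{u}^{\delta}(g^{\delta}-g)+\int_0^T(\bar{u}^{\delta}-\bar{u})\,g$ with $g^{\delta}(t):=\langle\partial_R\phi(t,\cdot),\nu_{t}^{\delta}\rangle$, and use that $g^{\delta}\to g:=a\,\langle\partial_R\phi,\nu_{t}\rangle$ uniformly and boundedly while $\bar{u}^{\delta}$ is uniformly bounded and converges in $L^{1}(0,T)$ (Lemma~\ref{limits}). Passing to the limit yields \eqref{weak_lsw}, with $\bar{u}$ the $L^{2}(0,T)$-limit furnished by Lemma~\ref{limits}.

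Finally I identify $\bar{u}$ with the stated quotient. Rewriting \eqref{meanfield} as $\bar{u}^{\delta}=\big\langle\tfrac{R}{1+\delta^\alpha R},\nu_{t}^{\delta}\big\rangle\big/\big\langle\tfrac{R^2}{1+\delta^\alpha R},\nu_{t}^{\delta}\big\rangle$ and using that all radii lie in a fixed interval $(0,C]$ (Lemma~\ref{gronwall}), on which $0\le R^{k}-\tfrac{R^{k}}{1+\delta^\alpha R}\le\delta^\alpha C^{k}$, a truncation of the monomials $R,R^2,R^3$ near the origin (the discarded contribution being uniformly small as the truncation shrinks, because $R^{k}$ itself is) gives $\langle\tfrac{R^{k}}{1+\delta^\alpha R},\nu_{t}^{\delta}\rangle\to a(t)\langle R^{k},\nu_{t}\rangle$ uniformly in $t$, $k=1,2,3$. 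Conservation of total volume gives $\langle R^{3},\nu_{t}^{\delta}\rangle=\langle R^{3},\nu_{0}^{\delta}\rangle$, so in the limit $a(t)\langle R^{3},\nu_{t}\rangle=\langle R^{3},\nu_{0}\rangle>0$ (as $\nu_0$ is a probability measure on $\R_+$); hence $a(t)>0$ and, since $R\le C$ on the support, $\langle R^{2},\nu_{t}\rangle\ge C^{-1}\langle R^{3},\nu_{t}\rangle>0$, and likewise the denominator of $\bar{u}^{\delta}$ is bounded away from zero for $\delta$ small. Therefore $\bar{u}^{\delta}\to\langle R,\nu_{t}\rangle/\langle R^{2},\nu_{t}\rangle$ uniformly in $t$, and comparison with the $L^{2}$-limit gives the claimed formula for $\bar{u}$. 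I expect the main obstacle to be this passage to the limit in the bilinear term $\bar{u}^{\delta}\langle\partial_R\phi,\nu_{t}^{\delta}\rangle$: only $L^{2}$ — not uniform or monotone — control of $\bar{u}^{\delta}$ is available, so one must play it against the uniform-in-time convergence of the measures; the accounting at particle extinction, which keeps every pairing supported away from $R=0$, is the remaining point needing care.
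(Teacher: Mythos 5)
Your proof follows essentially the same route as the paper: derive the $\delta$-level identity by the fundamental theorem of calculus on $t\mapsto\phi(t,R_i^\delta(t))$, replace $\dot R_i^\delta$ with the growth law of Lemma~\ref{law_lemma} (made uniform by Lemma~\ref{gronwall}), pass to the limit via Lemma~\ref{limits}, and identify $\bar u$ by testing \eqref{meanfield} against the empirical measures. The extra care you take with the bilinear term $\bar u^\delta\langle\partial_R\phi,\nu_t^\delta\rangle$, with the truncation needed to pass the moment functionals $R^k/(1+\delta^\alpha R)$ through the weak-$*$ convergence (which is a priori only for $C_{\mathrm c}$ test functions), and with the positivity of $a(t)$ and of the denominator $\langle R^2,\nu_t\rangle$ via volume conservation, are all genuine points the paper's terse proof leaves implicit, and you handle them correctly.
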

\begin{proof}
We begin by computing the mean-field limit $\bar{u} (t)$. For a continuous function $\phi(t)$ there holds, by the definition of $\bar{u}^\delta$,
\begin{align*}
\int \phi \bar{u}^{\delta} \frac{R^2}{1+\delta^\alpha R} \dd \nu_{t}^{\delta} &= 
\frac{1}{N_{\mathrm{i}}} \phi \bar{u}^{\delta} \sum \frac{R_{i}^{2}}{1+\delta^\alpha R_i} \\ 
& =\frac{1}{N_{\mathrm{i}}} \phi \sum \frac{R_{i}}{1+\delta^\alpha R_i}\\ 
& = \int \phi \frac{R}{1+ \delta^\alpha R}\, \dd \nu_{t}^{\delta}.
\end{align*}
Taking the limit $\delta \to 0$ on both sides, Lemma \ref{limits} gives 
\[
\bar{u} = \int R\, \dd \nu_t \,\bigg/ \int R^2\, \dd \nu_t.
\]

Consider now a smooth and compactly supported function $\phi$ as in the theorem's statement. Then, the fundamental theorem of calculus and Lemma \ref{law_lemma} give 
\begin{align*}
0 &= \frac{\dd}{\dd t} \int \phi \big( t, R_{i}^{\delta}(t) \big)\, \dd \nu_{t}^{\delta} + 
\int \phi \big( 0, R_{i}^{\delta}(0) \big)\, \dd \nu_{0}^{\delta}  \\
& = \int \bigg\{ \frac{\partial}{\partial t} \phi\big( t, R_{i}^{\delta}(t) \big) + \dot{R}_{i}^{\delta}(t) \frac{\partial}{\partial R} \phi\big( t, R_{i}^{\delta}(t) \big) \bigg\}\, \dd \nu_{t}^{\delta} + \int \phi \big( 0, R_{i}^{\delta}(0) \big)\, \dd \nu_{0}^{\delta} \\
& = \int \bigg\{ \frac{\partial}{\partial t} \phi\big( t, R_{i}^{\delta}(t) \big) + \bigg(\bar{u}^{\delta} - \frac{1}{R_{i}^{\delta}}\bigg) \frac{\partial}{\partial R} \phi\big( t, R_{i}^{\delta}(t) \big)  \bigg\}\, \dd \nu_{t}^{\delta} + \mathrm{O}(\delta^{\gamma}) \\ 
&\quad  + \int \phi \big( 0, R_{i}^{\delta}(0) \big)\, \dd \nu_{0}^{\delta}. 
\end{align*}
The result follows by taking the limit for a subsequence $\delta \to 0$ and using the strong convergence of $\bar{u}^{\delta}$.
\end{proof}

As a concluding remark, we note that the well-posedness (existence, uniqueness, and continuous dependence on initial data) of the weak formulation \eqref{weak_lsw} can be treated by the methods developed by Niethammer and Pego in \cite{np1} and \cite{np2}.

\section*{Acknowledgments}
Thanks are due to Barbara Niethammer for her substantial help and to Nick Alikakos and Bob Pego for helpful discussions. Thanks are also due to the anonymous referee for a careful reading of the manuscript.

\nocite{*}
\bibliographystyle{plain}

\end{document}